\documentclass{article}
\usepackage[T1]{fontenc}
\usepackage{lmodern}
\usepackage[utf8]{inputenc}
\usepackage{geometry}
\geometry{a4paper,top=3cm,bottom=3cm,left=3.5cm,right=3.5cm,heightrounded,bindingoffset=5mm}
\usepackage{booktabs}
\usepackage{color}
\usepackage{amsthm}
\usepackage{amsmath,amssymb}
\usepackage{graphicx}
\usepackage{listings}
\usepackage{emptypage}
\usepackage{newlfont}
\newcommand{\numberset}{\mathbb}
\newcommand{\N}{\numberset{N}}
\newcommand{\Z}{\numberset{Z}}

\DeclareMathOperator{\lc}{span}

\theoremstyle{plain} 
\newtheorem{thm}{Theorem}[section] 
\newtheorem{cor}[thm]{Corollary} 
\newtheorem{lem}[thm]{Lemma} 
 
\newtheorem*{theorem*}{Theorem}

\theoremstyle{definition}

\theoremstyle{definition} 
\newtheorem{rem}[thm]{Remark}

\usepackage{setspace}
\singlespacing

\newcommand{\D}{\mathcal{D}}


\title{Greedy expansions with prescribed coefficients in Hilbert spaces for special classes of dictionaries}
\author{Alessandro Oliaro$^1$, Luca Tomatis$^1$, Albert R. Valiullin$^2$, Artur R. Valiullin$^{2,3}$ \\[0.3cm]
	\small{$^1$Dipartimento di Matematica ``G. Peano'', Universit\`a di Torino} \\
	\small{Via Carlo Alberto, 10, I-10123 Torino, Italy.} \\
	\small{$^2$Department of Mathematical Analysis, Faculty of Mechanics and Mathematics,} \\
    \small{Lomonosov Moscow State University, } \\
    \small{Leninskie Gory 1, GSP-1, Moscow, 119991, Russia.} \\
    \small{$^3$Moscow Center for Fundamental and Applied Mathematics, Moscow, Russia.} \\
	\small{E-mail addresses: alessandro.oliaro@unito.it; luca.tomatis143@edu.unito.it;} \\
    \small{albert.valiullin@student.msu.ru; artur.valiullin@student.msu.ru}}
\date{}

\begin{document}
	
\maketitle

\textbf{Abstract}: Greedy expansions with prescribed coefficients have been introduced by V. N. Temlyakov in the frame of Banach spaces. The idea is to choose a sequence of fixed (real) coefficients $\{c_n\}_{n=1}^\infty$ and a fixed set of elements (dictionary) of the Banach space; then, under suitable conditions on the coefficients and the dictionary, it is possible to expand all the elements of the Banach space in series that contain only the fixed coefficients and the elements of the dictionary. In Hilbert spaces the convergence of greedy algorithm with prescribed coefficients is characterized, in the sense that there are necessary and sufficient conditions on the coefficients in order that the algorithm is convergent for all the dictionaries. This paper is concerned with the question if such conditions can be weakened for particular classes of spaces or dictionaries; we prove that this is the case for finite dimensional spaces, and for some classes of dictionaries related to orthonormal sequences in infinite dimensional spaces. \\[0.1cm]
{\bf Keywords:} Greedy expansions, Approximation, Hilbert spaces. \\[0.1cm]
{\bf 2020 Mathematics Subject Classification:} 41A65, 40A05.

\section{Introduction}

When dealing with Hilbert spaces $H$, the most standard way to expand a generic element of $H$ with respect to a fixed set of vectors $\mathcal{D}\subset H$ is to fix $\mathcal{D}$ as an (orthonormal) basis and consider the usual (unique) expansion with respect to such a basis. Greedy algorithms are different ways of expanding elements in Hilbert spaces, with respect to different fixed sets of vectors, called \emph{dictionaries}. Using such methods it is possible to obtain good properties of the corresponding expansions, related to different aspects; one of the main goals of greedy algorithms is related to the convergence rate of the expansions themselves, but there are aspects related to the `philosophy' in the way an element is expanded, that may be different with respect to the classical basis expansion. In fact, in the classical basis expansion the dictionary (i.e., the basis) is fixed and ordered independently from the expanded element; the coefficients are then chosen depending on the element we want to represent, and each element of the basis appears only once in the expansion. In the greedy algorithms we still have a fixed set of expanding elements, but without a prescribed order, and in the expansion it could happen that an element from the dictionary appears more than once (still with coefficients that in general depend on the element we want to represent). In the greedy algorithms with prescribed coefficients, that constitute the object of the present work, both the dictionary and the coefficients are fixed, independently from the vector we want to expand, and the only thing that depends on the element we want to represent is how the coefficients are associated to the corresponding elements from the dictionary; when the algorithm converges, the convergence of the corresponding series expansion is in general not unconditional.

There is a large literature on greedy expansion, both in Hilbert and in Banach spaces. We refer in particular to the works of Temlyakov \cite{libroprincipale}, \cite{articolo1}, \cite{articolo2}, where a large amount of material on different kind of greedy algorithms can be found. The greedy expansions with prescribed coefficients in Banach spaces have been introduced in \cite{articolo3}. Here we study a version of such algorithm for real Hilbert spaces. Let then $H$ be a real Hilbert space; we indicate with $\langle\cdot,\cdot\rangle$ its inner product. We say that $\mathcal{D}\subset H$ is a \emph{dictionary} if
\begin{itemize}
	\item[(i)] for every $g\in\mathcal{D}$ we have $\Vert g\Vert=1$
	\item[(ii)] $\mathcal{D}$ is complete, in the sense that $\overline{\lc\{\mathcal{D}\}} = H$. 
\end{itemize}

We say that a dictionary $\mathcal{D}$ is symmetric if for every $g\in\mathcal{D}$ we have $-g\in\mathcal{D}$. For a generic dictionary $\mathcal{D}$ we indicate with $\mathcal{D}^{\pm}$ its symmetrized, i.e., $\D^\pm:=\bigcup_{g\in\D}\{g,-g\}$. We shall suppose throughout the paper that the dictionary $\mathcal{D}$ is symmetric.

\vskip0.2cm
\noindent\textbf{Greedy Expansion with prescribed coefficients.} Now we describe how the algorithm works. We fix a coefficient sequence $C=\{c_n\}_n$ with $c_n>0$ for every $n$, and a \emph{weakening sequence} $\tau$, that is, a sequence $\tau=\{t_k\}_{k=1}^{\infty}$ with $0< t_k\leqslant 1$ for every $k$. For $f\in H$ we define:
\[f_0:=f\]
\[G_0:=0;\]
then, for every $m\geqslant 1$ we procede as follows. If $f_{m-1}\neq 0$:
\begin{itemize}
	\item We choose an element $\varphi_m\in\D$ such that
	\begin{equation}\label{finale1}
	\langle f_{m-1},\varphi_m\rangle\geqslant t_m\sup_{g\in\D}\langle f_{m-1},g\rangle\; .
	\end{equation}
	\item We define
	\[
	f_m:=f_{m-1}-c_m\varphi_m
	\]
	\[
	G_m=G_{m-1}+c_m\varphi_m\; .
	\]
\end{itemize}
In this way, $G_m$ represents the approximation of $f$, and $f_m$ is the remainder of such an approximation at the step $m$. Then the greedy expansion of $f$ is the series
$$
\sum_{n=1}^\infty c_n\varphi_n.
$$
If $f_{m-1}=0$ then the algorithm stops, and the corresponding approximation at this step coincides with $f$. 

Since the choice of $\varphi_m$ satisfying \eqref{finale1} is not unique, we may have different expansions of the same element.

We observe that if the weakening sequence $\tau$ satisfies $t_k=1$ for some $k$, the greedy expansion could not exist, since at some step it could be impossible to find $\varphi_m$ satisfying \eqref{finale1}. On the other hand, the case $t_k<1$ is much more difficult to treat, while for $t_k=1$ there are more complete results in the literature. 
Moreover, even when the greedy expansion exists, it does not need to converge to $f$. Of course the interesting case is when this happens, so we say that the greedy algorithm is \emph{convergent} if for every $f\in H$ we have
\begin{equation*}
	\lim_{m\to\infty} G_m=f,
\end{equation*}
for every approximating sequence $\{G_m\}_{m=1}^\infty$ obtained by the greedy algorithm.

When the weakening sequence satisfies $t_k=t$ for every $k$ we say that the greedy algorithm has \emph{weakening parameter} $t$. In this paper we deal with greedy algorithms with prescribed coefficients and weakening parameter $1$, assuming that at least one greedy expansion always exists. In this case the convergence of the algorithm is characterized by the following two theorems, proved in \cite{articolo4}.

	\begin{thm}[\cite{articolo4}]\label{pc24}
	Let $\D$ be a symmetric dictionary in $H$, and $C$ be a sequence of positive coefficients satisfying
	\[
	\sum_{n=1}^\infty c_n=\infty
	\]
	\[
	c_n=o\left(\frac{1}{\sqrt{n}}\right),\;  n\rightarrow\infty\; .
	\]
	Then, for every $f\in H$, all the possible greedy expansions of $f$ with prescribed coefficients $C$ and weakening parameter $t=1$ with respect to the dictionary $\D$ converge to $f$.
\end{thm}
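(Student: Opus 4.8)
The plan is to track the squared norms of the remainders and to prove that $\|f_m\|\to0$, which is equivalent to $G_m\to f$. Since $\|\varphi_m\|=1$, expanding $f_m=f_{m-1}-c_m\varphi_m$ gives the energy identity
\[
\|f_m\|^2=\|f_{m-1}\|^2-2c_m a_m+c_m^2,\qquad a_m:=\langle f_{m-1},\varphi_m\rangle .
\]
Because $\D$ is symmetric and $\varphi_m$ realises the supremum in \eqref{finale1} with $t=1$, we have $a_m=\sup_{g\in\D}\langle f_{m-1},g\rangle\ge0$, and in fact $a_m\ge|\langle f_{m-1},g\rangle|$ for \emph{every} $g\in\D$; completeness forces $a_m>0$ whenever $f_{m-1}\neq0$. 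Telescoping yields, for all $n<m$,
\[
\|f_m\|^2=\|f_n\|^2+\sum_{k=n+1}^m c_k^2-2\sum_{k=n+1}^m c_k a_k ,
\]
whence both the near-monotonicity $\|f_m\|^2\le\|f_n\|^2+\sum_{k=n+1}^m c_k^2$ and the global bound $2\sum_{k\le m}c_ka_k\le\|f\|^2+\sum_{k\le m}c_k^2$.

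I would first extract that the correlations are small on average. Setting $S_m:=\sum_{k\le m}c_k\to\infty$, the elementary fact that $c_k\to0$ together with $\sum c_k=\infty$ gives $\sum_{k\le m}c_k^2=o(S_m)$, so
\[
\frac{1}{S_m}\sum_{k=1}^m c_k a_k\le\frac{\|f\|^2+\sum_{k\le m}c_k^2}{2S_m}\longrightarrow0 ;
\]
thus the $c_k$-weighted averages of the $a_k$ tend to $0$, and in particular $\liminf_k a_k=0$. Note that this step uses only $c_k\to0$ and $\sum c_k=\infty$; the sharp rate is not yet needed, and the real issue is to convert smallness of the correlations into smallness of the norm (which, as the paper stresses, is automatic in finite dimensions but not in general).

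The device for this conversion is the identity $\|f_m\|^2=\langle f_m,f\rangle-\sum_{k\le m}c_k\langle f_m,\varphi_k\rangle$ coming from $f_m=f-G_m$. For $f\in\lc\{\D\}$, writing $f=\sum_i b_i g_i$ with $g_i\in\D$ and using $|\langle f_m,g\rangle|\le a_{m+1}$ for all $g\in\D$, this gives
\[
\|f_m\|^2\le a_{m+1}\Big(\sum_i|b_i|+S_m\Big),
\]
with the general $f$ handled by density once $\sup_m\|f_m\|<\infty$ is known. Hence $\|f_m\|\to0$ along any subsequence on which $a_{m+1}S_m\to0$, and the whole theorem reduces to the two facts $\sup_m\|f_m\|<\infty$ and $\liminf_m a_{m+1}S_m=0$. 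This is the step I expect to be the main obstacle: the averaged estimate above only excludes $a_{m+1}\asymp1/S_m$ up to a logarithmic factor, so it does not by itself force $\liminf a_{m+1}S_m=0$, and moreover when $\sum c_k^2=\infty$ one cannot derive a contradiction from $\sum c_k a_k=\infty$. This is precisely where the exponent $1/2$ must enter. I would establish both facts through a windowed energy estimate on geometric blocks $[n,\lfloor\lambda n\rfloor]$: writing $c_k=\varepsilon_k/\sqrt{k}$ with $\varepsilon_k\to0$, one has
\[
\sum_{k=n}^{\lfloor\lambda n\rfloor}c_k^2\le\Big(\sup_{k\ge n}\varepsilon_k^2\Big)\sum_{k=n}^{\lfloor\lambda n\rfloor}\frac1k\longrightarrow0
\]
for fixed $\lambda$, so across any single block starting far out the remainder cannot grow appreciably, while the divergence of $S_m$ drains the energy. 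The delicate combinatorial point — matching the $o(1)$ per-block growth against the energy removed by the correlation term, uniformly over infinitely many blocks — is the technical heart, and the bound fails exactly when $\sqrt{k}\,c_k\not\to0$.

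Finally I would upgrade $\liminf\|f_m\|=0$ to $\lim\|f_m\|=0$ using the near-monotonicity of the first paragraph: given $\varepsilon$, choose $n$ at the start of a geometric block with $\|f_n\|<\varepsilon$; then $\|f_m\|^2\le\|f_n\|^2+\sum_{k=n+1}^m c_k^2$ keeps $\|f_m\|$ comparable to $\varepsilon$ throughout that block, and iterating over consecutive blocks (the correlation term reabsorbing the small accumulated increments) confines $\|f_m\|$ below a fixed multiple of $\varepsilon$ for all large $m$. In this scheme the symmetry of $\D$ is used only through $a_m\ge0$ and $a_m\ge|\langle f_{m-1},g\rangle|$, completeness only through the density of $\lc\{\D\}$, and the full strength $c_n=o(1/\sqrt n)$ only in the windowed estimate of the previous paragraph — isolating exactly the ingredient that can be relaxed for the special dictionaries treated later in the paper.
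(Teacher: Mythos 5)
You should first be aware that this paper never proves Theorem \ref{pc24}: it is quoted from \cite{articolo4}, and the only ingredient of it used elsewhere in the paper is Lemma 4.3 of \cite{articolo4} (the passage from $\liminf_n\|f_n\|=0$ to $\lim_n\|f_n\|=0$, invoked in the proof of Theorem \ref{val2}). So your proposal has to stand as a proof on its own, and it does not, because at the two points where the theorem has actual content you describe the difficulty instead of resolving it. First, the facts $\sup_m\|f_m\|<\infty$ and $\liminf_m a_{m+1}S_m=0$, to which you correctly reduce the theorem, are to be supplied by a ``windowed energy estimate on geometric blocks''; but the only thing you actually verify there is $\sum_{k=n}^{\lfloor\lambda n\rfloor}c_k^2\to0$, and you explicitly leave the ``delicate combinatorial point'' as ``the technical heart''. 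That deferred point \emph{is} the theorem, not a step of its proof. Second, the upgrade from $\liminf\|f_m\|=0$ to $\lim\|f_m\|=0$ is dispatched by ``the correlation term reabsorbing the small accumulated increments'', but this is exactly the hard step when $\sum_k c_k^2=\infty$, which the hypotheses allow (e.g.\ $c_k=1/\sqrt{k\log k}$): the near-monotonicity $\|f_m\|^2\le\|f_n\|^2+\sum_{k=n+1}^m c_k^2$ then gives nothing over infinitely many consecutive blocks, and to make the correlation term act one needs a lower bound on $a_k$ in terms of $\|f_{k-1}\|$. In infinite dimensions there is no bound of the form $a_k\ge c\,\|f_{k-1}\|$ (that compactness argument is precisely what is special to the finite-dimensional Theorem \ref{fin1}); the usable substitute comes from testing $f_{k-1}$ against the elements already selected after a step $n$ at which $\|f_n\|$ is small, namely
\[
a_k\ \ge\ \frac{\langle f_{k-1},\,f_{k-1}-f_n\rangle}{S_{k-1}-S_n}\ \ge\ \frac{\|f_{k-1}\|^2-\|f_{k-1}\|\,\|f_n\|}{S_{k-1}-S_n}\,,
\]
and closing the argument from this inequality (which is in substance Lemma 4.3 of \cite{articolo4}) again uses the rate $c_n=o(1/\sqrt n)$ in an essential, nontrivial way. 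None of this appears in your text, so what you have is a correct reduction plus an announcement of the two missing lemmas.

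Ironically, the obstacle you dwell on — that the averaged bound cannot exclude $a_{m+1}\asymp 1/S_m$ — is the fillable one, and with tools you already set up; no geometric blocks are needed. From $c_k=o(1/\sqrt k)$ one gets $S_k=o(\sqrt k)$, hence $c_kS_k\to0$; by Stolz--Ces\`aro, since the term ratio is $c_k^2\big/(c_k/S_{k-1})=c_kS_{k-1}\to0$ and $\sum_{k\le m}c_k/S_{k-1}\ge\log(S_m/S_1)\to\infty$, one obtains $\sum_{k\le m}c_k^2=o\bigl(\sum_{k\le m}c_k/S_{k-1}\bigr)$. If $a_kS_{k-1}\ge\delta>0$ held for all large $k$, then $2\sum_{k\le m}c_ka_k\ge 2\delta\log S_m+O(1)$ would contradict your global bound $2\sum_{k\le m}c_ka_k\le\|f\|^2+\sum_{k\le m}c_k^2$; hence $\liminf_m a_{m+1}S_m=0$. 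Moreover your duality identity plus density gives, for $f'=\sum_i b_ig_i\in\lc\{\D\}$ with $\|f-f'\|<\varepsilon$, the quadratic inequality $\|f_m\|^2\le a_{m+1}\bigl(\sum_i|b_i|+S_m\bigr)+\varepsilon\|f_m\|$, and solving it along the subsequence on which $a_{m+1}S_m\to0$ yields $\liminf_m\|f_m\|=0$ without ever needing the unproven bound $\sup_m\|f_m\|<\infty$. Thus the genuine gap in your proposal is concentrated entirely in the $\liminf$-to-$\lim$ step: as written, your argument can be completed only up to $\liminf_m\|f_m\|=0$, which is the analogue of Theorem \ref{val1}, not of Theorem \ref{pc24}.
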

	
\begin{thm}[\cite{articolo4}]
	There exists a Hilbert space $H$, a symmetric dictionary $\D$, an element $f\in H$ and a monotonic sequence of positive coefficients $C=\{c_n\}_{n=1}^\infty$ with $c_n\asymp\frac{1}{\sqrt{n}}$, such that there is a unique realization of the greedy expansion of $f$ with prescribed coefficients $C$ and weakening parameter $t=1$, and such realization does not converge to $f$.
\end{thm}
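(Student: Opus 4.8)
The statement is a sharpness (counterexample) result complementing Theorem 1.1, so the plan is to construct explicitly, in $H=\ell^2$, a target $f$, a convex decreasing sequence $c_n=a\,n^{-1/2}$ (which is monotone and $\asymp n^{-1/2}$), and a symmetric dictionary $\D=\{\pm\varphi_m:m\ge1\}$ for which the weakening-parameter-$1$ algorithm is forced, uniquely, to pick $\varphi_m$ at step $m$, while $\|f_m\|$ stays bounded away from $0$. Everything rests on the energy identity $\|f_m\|^2=\|f_{m-1}\|^2-2c_mM_m+c_m^2$, where $M_m:=\langle f_{m-1},\varphi_m\rangle=\sup_{g\in\D}\langle f_{m-1},g\rangle$. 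Since $G_m\to f$ iff $\|f_m\|\to0$, it suffices to keep $\|f_m\|\ge\delta>0$; and the identity shows that a remainder's norm is unchanged exactly when $M_m=c_m/2$. The whole difficulty is that greedy maximality at the next step, tested against the reflected atom $-\varphi_{m-1}\in\D$, forces $M_m\ge c_{m-1}-M_{m-1}$, i.e.\ $M_m+M_{m-1}\ge c_{m-1}$, which is incompatible with $M_m\equiv c_m/2$ for a decreasing sequence. The construction must therefore keep $M_m$ slightly above $c_m/2$, by an amount just large enough to beat the backtracking atom yet small enough that $\sum_m c_m(2M_m-c_m)<\|f\|^2$; at the rate $c_n\asymp n^{-1/2}$ both can hold simultaneously, and this is the heart of the criticality.

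Concretely, I would fix $M_m:=c_m-c_{m+1}/2$ (so the overshoot over $c_m/2$ is $\epsilon_m=(c_m-c_{m+1})/2$) and build $\{\varphi_m\}$ inductively so that (i) each $\varphi_m=w_m+s_me_m$ introduces a fresh orthonormal coordinate $e_m\perp\lc\{e_0,\dots,e_{m-1}\}$, with $w_m$ in the span of the earlier coordinates; (ii) the Gram matrix of $\{\varphi_m\}$ is tridiagonal, $\langle\varphi_m,\varphi_{m-1}\rangle=-1/2$ and $\langle\varphi_m,\varphi_j\rangle=0$ for $j\le m-2$; (iii) $\langle f_{m-1},\varphi_m\rangle=M_m$, starting from $f=f_0=\rho_0e_0$. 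At each step $w_m$ is the unique vector of $\lc\{e_0,\dots,e_{m-1}\}$ having the prescribed inner products $0,\dots,0,-1/2$ with $\varphi_1,\dots,\varphi_{m-1}$ and $M_m$ with $f_{m-1}$; solving the (tridiagonal) linear system one checks that $\|w_m\|^2\to1/2<1$, so $s_m=\sqrt{1-\|w_m\|^2}$ is real and bounded away from $0$, and $\varphi_m$ is well defined and of unit norm.

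The verification then runs as follows. A short induction using the tridiagonal decoupling (ii) shows that once $m-1\ge j+1$ the value $\langle f_{m-1},\varphi_j\rangle$ is frozen, and the choice $M_j=c_j-c_{j+1}/2$ makes that frozen value exactly $0$; hence $f_{m-1}\perp\varphi_j$ for every $j\le m-2$, while $\langle f_{m-1},\varphi_{m-1}\rangle=-c_m/2$ and $\langle f_{m-1},\varphi_j\rangle<0$ for $j>m$ by strict convexity. Comparing all competitors, the reflected atom $-\varphi_{m-1}$ scores $c_m/2$, the old atoms score $0$, the future atoms $\varphi_j$ score a negative number and the $-\varphi_j$ $(j>m)$ a tiny positive second difference; each is strictly below $M_m=c_m/2+(c_m-c_{m+1})/2$, the margin over $-\varphi_{m-1}$ being exactly $(c_m-c_{m+1})/2>0$. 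Thus $\varphi_m$ is the unique maximizer at every step, the realization is unique and equals $\sum_m c_m\varphi_m$, and the energy identity gives $\|f_m\|^2=\rho_0^2-2\sum_{k\le m}c_k\epsilon_k\ge\rho_0^2-2\sum_{k}c_k\epsilon_k$; since $c_k\epsilon_k\asymp k^{-2}$ the series converges, and choosing $\rho_0$ large makes the right-hand side a positive constant $L$, whence $\|f_m\|\ge\sqrt{L}>0$, $f_m\not\to0$ and $G_m\not\to f$.

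It remains to check that $\D$ is a dictionary, i.e.\ complete, and here is where the exact rate is indispensable and where I expect the main technical obstacle. Any $v\in\ell^2$ orthogonal to all $\varphi_m$ is forced by (i)--(iii) to have coordinates $v_m=\mu_m v_0$ with $|\mu_m|\asymp c_m\asymp m^{-1/2}$, so $\|v\|^2\asymp v_0^2\sum_m m^{-1}=\infty$ unless $v=0$; thus $\overline{\lc\{\D\}}=\ell^2$. This uses precisely $\sum_n c_n^2=\infty$, i.e.\ $c_n\asymp n^{-1/2}$: for $c_n=o(n^{-1/2})$ the same vector $v$ would lie in $\ell^2$ and the dictionary would fail to be complete, in agreement with Theorem 1.1. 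The genuinely delicate point, which I would treat most carefully, is the simultaneous control of all the inner products $\langle f_{m-1},\pm\varphi_j\rangle$ over the entire infinite dictionary so as to guarantee global --- not merely local --- maximality together with uniqueness; the tridiagonal, pairwise-obtuse geometry is engineered exactly so that the past atoms become orthogonal to the current remainder and the only serious rival, the reflection $-\varphi_{m-1}$, is beaten by the razor-thin margin $(c_m-c_{m+1})/2$ that survives only at the critical rate.
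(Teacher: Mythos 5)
First, a remark on the comparison itself: the paper does not prove this statement at all --- it is quoted from \cite{articolo4} --- so there is no internal proof to measure your attempt against; what follows assesses your construction on its own terms.

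Your dynamical analysis is sound: the tridiagonal Gram structure, the freezing of past scores at $0$, the margin $(c_m-c_{m+1})/2$ over the backtracking atom $-\varphi_{m-1}$, the uniqueness of the maximizer at every step, and the energy bookkeeping $\lVert f_m\rVert^2=\rho_0^2-\sum_{k\leqslant m}c_k(c_k-c_{k+1})\geqslant\rho_0^2-K$ are all correct. The fatal gap is exactly at the point you flagged as the main obstacle: completeness, and your key estimate there is wrong. Write $\varphi_m=w_m+s_me_m$ and decompose $w_m=P_m+\gamma_m u_{m-1}$, where $P_m$ is the projection of $w_m$ onto $\lc\{\varphi_1,\dots,\varphi_{m-1}\}$ and $u_{m-1}$ spans the orthogonal complement of that span inside $\lc\{e_0,\dots,e_{m-1}\}$. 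Any $v\perp\{\varphi_j\}_{j\geqslant1}$ has its truncation $v^{(m-1)}$ parallel to $u_{m-1}$ (it is orthogonal to all earlier atoms), so the orthogonality recursion reads $|v_m|=|\gamma_m|\,\lVert v^{(m-1)}\rVert/s_m$, hence $\lVert v\rVert^2=v_0^2\prod_m\bigl(1+\gamma_m^2/s_m^2\bigr)$, and completeness is \emph{equivalent} to $\sum_m\gamma_m^2=\infty$. But $\gamma_m$ is not of order $c_m$: solving the tridiagonal Gram system gives $P_m=\sum_j a_j\varphi_j$ with $a_{m-1}=-(m-1)/m$, so $\langle P_m,f_{m-1}\rangle=a_{m-1}\langle f_{m-1},\varphi_{m-1}\rangle=\frac{m-1}{2m}\,c_m$, which almost exactly cancels $M_m=c_m-c_{m+1}/2$:
\begin{equation*}
\gamma_m\,\lVert f''_{m-1}\rVert \;=\; M_m-\langle P_m,f_{m-1}\rangle\;=\;\frac{c_m-c_{m+1}}{2}+\frac{c_m}{2m}\;\asymp\; m^{-3/2},
\end{equation*}
where $f''_{m-1}$ (of norm $\approx\rho_0$) is the part of $f_{m-1}$ orthogonal to the old atoms. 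Thus $\sum_m\gamma_m^2<\infty$, the defect vector $v$ lies in $\ell^2\setminus\{0\}$, and $\overline{\lc\{\D\}}$ is a closed hyperplane: your $\D$ is not a dictionary for $H$. Geometrically, $\varphi_m$ meets the remainder almost entirely through $-\frac12\varphi_{m-1}$, not through the fresh direction, so the dictionary never ``sees'' most of $f$.

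This cannot be patched within your architecture. Since $\langle f,v\rangle=\rho_0 v_0\neq0$, $f\notin\overline{\lc\{\D\}}$, so you cannot simply declare $\overline{\lc\{\D\}}$ to be the ambient space; adding $\pm v/\lVert v\rVert$ to $\D$ restores completeness but destroys the dynamics, because $\langle f_m,v\rangle=\langle f,v\rangle$ is a positive constant while your intended selections have scores $M_m\to0$, so the algorithm is eventually forced to pick $\pm v/\lVert v\rVert$. If instead you replace $f$ by its projection $\tilde f$ onto $\overline{\lc\{\D\}}$, every inner product with dictionary elements is unchanged (the same unique realization runs), but now $\lVert\tilde f_m\rVert^2=\lVert f_m\rVert^2-\rho_0^2v_0^2/\lVert v\rVert^2$, and a computation shows that to leading order in $1/\rho_0$ the limit of the right-hand side is exactly $0$: with $c_m=am^{-1/2}$ the series $\sum_m\bigl[\rho_0^2\gamma_m^2/s_m^2-c_m(c_m-c_{m+1})\bigr]$ telescopes to $0$. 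In other words, your apparent gap $\rho_0^2-K$ is precisely the energy of the component of $f$ that the dictionary cannot reach, and nothing survives once it is removed. The obstruction is structural: for a symmetric \emph{complete} dictionary, scores $M_m\to0$ force $f_m\rightharpoonup0$ weakly, whereas the tridiagonal design freezes $\langle f_n,\varphi_j\rangle$ after step $j+1$ --- necessarily at $0$, since a score frozen at $\nu\neq0$ would, by symmetry of $\D$, eventually exceed $M_n\to0$ and hijack the selection --- and this freeze-at-zero constraint is what produces the cancellation above for any admissible choice of the $M_m$. A genuine proof must let the scores of \emph{all} past atoms keep decaying to zero without ever freezing them, which is exactly the global bookkeeping you identified as delicate; the tridiagonal shortcut cannot deliver it.
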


The previous results leave open the question if the greedy algorithm can converge with weaker conditions on the coefficients for particular Hilbert spaces or for particular dictionaries. In fact, in \cite{articolo5} it is proved that in the case of finite dimensional spaces the greedy algorithm is convergent for a larger set of prescribed coefficients with respect to Theorem \ref{pc24}, as stated in the following result.
	
\begin{thm}[\cite{articolo5}]\label{arte}
		Let $H$ be a finite dimensional Hilbert space, $\D$  be a symmetric dictionary and $C$ be a non increasing sequence of positive coefficients such that
		\[
		\sum_{n=1}^\infty c_n=\infty
		\]
		\[
		\lim_{n\to\infty}c_n=0\; .
		\]
		Then for every $f\in H$, all the possible greedy expansions of $f$ with prescribed coefficients $C$ and weakening parameter $t=1$ converge to $f$.
	\end{thm}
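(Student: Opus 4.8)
The plan is to track the squared norm of the residuals $f_m$ and show $\Vert f_m\Vert\to 0$, which is equivalent to $G_m\to f$. Since $\Vert\varphi_m\Vert=1$ and the weakening parameter is $t=1$, expanding $f_m=f_{m-1}-c_m\varphi_m$ gives the fundamental recursion
\[
\Vert f_m\Vert^2=\Vert f_{m-1}\Vert^2-2c_m\,\alpha(f_{m-1})+c_m^2,\qquad \alpha(h):=\sup_{g\in\D}\langle h,g\rangle .
\]
The decisive point — and the place where finite dimensionality enters — is a uniform lower bound on the greedy gain: I claim there is $\gamma>0$ with $\alpha(h)\geqslant\gamma\Vert h\Vert$ for all $h\in H$. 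Indeed, $\alpha$ is positively homogeneous of degree one and, being a convex function that is finite on all of $H$ (it is bounded above by $\Vert h\Vert$), is continuous; moreover $\alpha(h)>0$ whenever $h\neq 0$, for otherwise $\langle h,g\rangle\leqslant 0$ for all $g\in\D$, and symmetry of $\D$ would force $h\perp\D$, hence $h\perp\overline{\lc\{\D\}}=H$ and $h=0$. Minimizing the continuous positive function $\alpha$ over the compact unit sphere of $H$ yields the constant $\gamma>0$, and homogeneity upgrades this to all of $H$. (This is exactly the estimate that can fail in infinite dimensions, where the infimum over the sphere may be $0$.)

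With $a_m:=\Vert f_m\Vert$ the recursion and the bound give $a_m^2\leqslant a_{m-1}^2-2\gamma c_m a_{m-1}+c_m^2$, so the norm strictly decreases whenever $a_{m-1}>c_m/(2\gamma)$. First I would deduce boundedness: if the norm increases at step $m$ then necessarily $a_{m-1}\leqslant c_m/(2\gamma)$, whence $a_m\leqslant a_{m-1}+c_m\leqslant c_1\bigl(1+\tfrac{1}{2\gamma}\bigr)$; an induction then bounds $\{a_m\}$ by $\max\{a_0,\,c_1(1+\tfrac{1}{2\gamma})\}$. Next I would show $\liminf_m a_m=0$. If instead $a_m\geqslant\varepsilon$ for all large $m$, then choosing $m$ large enough that $c_m\leqslant\gamma\varepsilon$ turns the recursion into $a_m^2\leqslant a_{m-1}^2-\gamma\varepsilon\,c_m$; summing and using $\sum_n c_n=\infty$ drives the right-hand side to $-\infty$, contradicting $a_m^2\geqslant 0$.

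Finally I would upgrade $\liminf_m a_m=0$ to $\lim_m a_m=0$ using $c_n\to 0$ via a ``small steps cannot climb'' argument. Suppose $\limsup_m a_m=L>0$ and fix levels $0<\beta<\Gamma<L$ (say $\beta=L/3$, $\Gamma=2L/3$). I would choose $M$ so large that $c_m<\min\{\Gamma-\beta,\,2\gamma\beta\}$ for $m>M$, and pick (using $\liminf_m a_m=0$) an index $m_0>M$ with $a_{m_0}<\beta$. Then an induction shows $a_m<\Gamma$ for every $m\geqslant m_0$: if $a_{m-1}<\beta$ then $a_m\leqslant a_{m-1}+c_m<\beta+(\Gamma-\beta)=\Gamma$, while if $\beta\leqslant a_{m-1}<\Gamma$ then $a_{m-1}>c_m/(2\gamma)$ forces $a_m<a_{m-1}<\Gamma$. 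This contradicts $\limsup_m a_m=L>\Gamma$, so $a_m\to 0$. I expect the main obstacle to be the first step — securing the uniform constant $\gamma$ — since everything downstream is an elementary but careful bookkeeping of the scalar recursion; the continuity and strict positivity of $\alpha$ together with the compactness of the sphere are precisely what make the finite-dimensional case genuinely easier than the general one.
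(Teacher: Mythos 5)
Your proof is correct and follows essentially the same strategy as the paper's own proof of its generalization, Theorem \ref{fin1} (which subsumes the cited statement by taking $t_n\equiv 1$): a uniform lower bound $\sup_{g\in\D}\langle h,g\rangle\geqslant\gamma\Vert h\Vert$ obtained from compactness of the unit sphere and completeness of $\D$, the squared-norm recursion for the residuals, divergence of $\sum_n c_n$ to force $\liminf_m\Vert f_m\Vert=0$, and $c_n\to 0$ to prevent the residual norm from climbing back up. The only cosmetic differences are your extra (not actually needed) boundedness step and your $\limsup$/two-level bookkeeping in place of the paper's $\varepsilon/c$ versus $\sqrt{2}\,\varepsilon/c$ band argument; like the paper, you never genuinely use the non-increasing hypothesis, which is precisely the observation that lets the paper drop it in Theorem \ref{fin1}.
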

\noindent In this paper we extend the previous results in several directions.
\begin{itemize}
	\item[-] Concerning the finite dimensional case, we prove that the result of Theorem \ref{arte} holds without requiring that $C$ is non increasing, and moreover it can be extended to an arbitrary weakening sequence $\tau=\{ t_k\}_{k=1}^\infty$ with $t_k\in(0,1]$ under the following conditions on the coefficients (cf. Theorem \ref{fin1})
	\begin{equation}\label{conditions}
	\sum_{n=1}^\infty c_nt_n=\infty,\qquad \lim_{n\to\infty}\frac{c_n}{t_n}=0.
	\end{equation}
\end{itemize}
Concerning the infinite dimensional case we prove the following results.
\begin{itemize}
	\item[-] When the dictionary is the symmetrized of an orthonormal basis and we consider a weakening parameter $t=1$ (i.e., $t_k=1$ for every $k$), the conditions \eqref{conditions} are sufficient to guarantee the convergence of the algorithm (cf. Theorem \ref{val2}).
	\item[-] The fact that in the previous point we consider $t=1$ is essential; indeed when the dictionary is the symmetrized of an orthonormal basis and we consider a weakening parameter $t<1$ (i.e., $t_k=t<1$ for every $k$) and coefficients satisfying \eqref{conditions}, the convergence of the algorithm fails (cf. Theorem \ref{val3}).
	\item[-] When the dictionary is the symmetrized of the union of an orthonormal basis $\mathcal{E}$ and an arbitrary number of elements that are linear combinations of a finite subset of $\mathcal{E}$, the convergence of the algorithm (with weakening parameter $t=1$) is guaranteed under the hypotheses \eqref{conditions} (cf. Corollary \ref{new3}).
\end{itemize}
The last result is a consequence of a general result on greedy expansions on direct sums of Hilbert spaces, given in Theorem \ref{sd8}.

\section{Finite-dimensional case}

In this section we prove a general result on convergence of greedy algorithm for finite dimensional spaces.

\begin{thm}\label{fin1}

	Let $H$ be a finite-dimensional Hilbert space, $\mathcal{D}$ be a dictionary in $H$, $C = \{c_n \}_{n=1}^{\infty}$ be a sequence of positive coefficients and $\tau = \{t_n \}_{n=1}^{\infty}$ be a sequence of coefficients in $(0, 1]$ such that 
	\begin{enumerate}
		\item $\sum\limits_{n=1}^{\infty} c_n t_n = \infty$.
		\item $\lim\limits_{n \to \infty} \cfrac{c_n}{t_n} = 0$.
	\end{enumerate}
	Then, for every $f \in H$, all the possible greedy expansions of $f$ with prescribed coefficients
	$C$ and weakening sequence $\tau$ with respect to the dictionary $\mathcal{D}$ converge to f.
	
\end{thm}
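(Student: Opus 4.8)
The plan is to track the residual norms $a_m:=\|f_m\|$ and prove $a_m\to 0$, which is equivalent to $G_m\to f$. The whole argument is driven by a single energy inequality: expanding $\|f_m\|^2=\|f_{m-1}-c_m\varphi_m\|^2$ and using $\|\varphi_m\|=1$ together with the selection rule \eqref{finale1} gives
\[
a_m^2\le a_{m-1}^2-2c_mt_m\,S(f_{m-1})+c_m^2,\qquad S(f):=\sup_{g\in\mathcal{D}}\langle f,g\rangle .
\]
If the algorithm terminates (some $f_{m-1}=0$) the expansion reproduces $f$ exactly, so I may assume $a_m>0$ for all $m$; and since the estimate below holds for every admissible choice of the $\varphi_m$, it will cover all realizations of the expansion at once.

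The step that I expect to be the heart of the proof, and the one place where finite-dimensionality is genuinely used, is a uniform lower bound $S(f)\ge\gamma\|f\|$ with $\gamma>0$. First, symmetry of $\mathcal{D}$ forces $S(f)\ge 0$, and $S$ is a supremum of $1$-Lipschitz linear functionals, hence continuous and positively homogeneous. On the unit sphere, $S(f)=0$ would mean $\langle f,g\rangle\le 0$ for every $g$, hence (by symmetry) $f\perp\mathcal{D}$, so $f\perp\overline{\lc\{\mathcal{D}\}}=H$ and $f=0$; thus $S$ is strictly positive on the sphere. Compactness of the unit sphere in finite dimensions then yields $\gamma:=\min_{\|f\|=1}S(f)>0$, and homogeneity upgrades this to $S(f)\ge\gamma\|f\|$ for all $f$. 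Substituting, the energy inequality becomes
\[
a_m^2\le a_{m-1}^2-2\gamma c_mt_m\,a_{m-1}+c_m^2=a_{m-1}^2+2\gamma c_mt_m(\varepsilon_m-a_{m-1}),\qquad \varepsilon_m:=\frac{c_m}{2\gamma t_m}.
\]
Hypothesis (2) makes the threshold $\varepsilon_m\to 0$, and since $t_m\le 1$ it also gives $c_m\le c_m/t_m\to 0$; these two facts are all I will need.

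From here convergence follows by a trapping argument. Fix $\delta>0$ and choose $m_0$ so large that $\varepsilon_m<\delta/2$ and $\varepsilon_m+c_m<\delta$ for every $m\ge m_0$. I first claim there is some $m\ge m_0$ with $a_m<\delta$: otherwise $a_{m-1}\ge\delta>\varepsilon_m$ for all $m>m_0$, and the inequality gives $a_m^2-a_{m-1}^2\le-\gamma\delta\,c_mt_m$, so summing and using $\sum c_mt_m=\infty$ (hypothesis (1)) would drive $a_M^2$ to $-\infty$, a contradiction. I then claim the event $\{a_m<\delta\}$ propagates: if $a_{m-1}<\delta$ with $m-1\ge m_0$, then either $a_{m-1}>\varepsilon_m$, in which case the inequality forces $a_m<a_{m-1}<\delta$, or $a_{m-1}\le\varepsilon_m$, in which case the crude bound $a_m\le a_{m-1}+c_m\le\varepsilon_m+c_m<\delta$ applies. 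Hence $a_m<\delta$ for every index beyond the first one where it dips below $\delta$, so $\limsup_m a_m\le\delta$; since $\delta>0$ is arbitrary, $a_m\to 0$.

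In summary, the only delicate point is the positivity of $\gamma$, obtained from compactness of the unit sphere together with completeness and symmetry of $\mathcal{D}$; everything else is the recursive bookkeeping of the energy inequality, with the two driving facts $\varepsilon_m\to 0$ and $\sum c_mt_m=\infty$ supplied directly by \eqref{conditions}. This is also precisely the quantity that degenerates in infinite dimensions, which is what makes the analogous statement there require extra structure on the dictionary.
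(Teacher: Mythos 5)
Your proposal is correct and follows essentially the same route as the paper's proof: the compactness/completeness argument giving the uniform bound $\sup_{g\in\mathcal{D}}\langle f,g\rangle\geqslant\gamma\Vert f\Vert$, the same energy inequality, a dip-below-threshold step driven by $\sum c_nt_n=\infty$, and a trapping step driven by $c_n/t_n\to 0$. The only (minor) difference is in the trapping bookkeeping, where your case split on $a_{m-1}\lessgtr\varepsilon_m$ keeps the residual below $\delta$ exactly, whereas the paper allows a brief excursion up to $\sqrt{2}\,\varepsilon/c$; this is a small refinement, not a different argument.
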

\begin{proof}
	
	Due to the fact that in the finite-dimensional Hilbert space a unit sphere is a compact and the completeness of $\mathcal{D}$, there exist a constant $c > 0$ such that for every $f \in H$
	\begin{align*}
		\sup\limits_{g \in D}\left\langle f, g \right\rangle > c \lVert f \rVert.
	\end{align*}
	Then from the definition of greedy expansions with prescribed coefficients we immediately have that
	\begin{align}\label{4.1}
		\left\langle f_n, \varphi_{n+1} \right\rangle > c t_{n+1}\lVert f_n \rVert.
	\end{align}
	Let us fix an arbitrary $\varepsilon > 0$. Due to the second condition of the theorem, there exist $N > 0$, such that for every $n > N$ the following inequalities hold 
	\begin{align}\label{4.2}
		\cfrac{c_n}{t_n} < \varepsilon{,} \qquad c_n < \cfrac{\varepsilon}{c}.
	\end{align}
	Let us assume that $\lVert f_k \rVert > \cfrac{\varepsilon}{c}$ for some $k > N$. In this case, we will show that there exists $l > k$ such that
	\begin{align}\label{4.3}
		\lVert f_l \rVert < \cfrac{\varepsilon}{c}.
	\end{align}
	Assume the contrary. Then from $\left(\ref{4.1}\right)$ and $\left(\ref{4.2}\right)$ for every $n > k$ we get
	\begin{align}
		\lVert f_{n+1} \rVert ^2 &= \langle f_{n+1}, f_{n+1}\rangle = \langle f_n - c_{n+1}\varphi_{n+1}, f_n - c_{n+1}\varphi_{n+1}\rangle = \notag \\
		&= \lVert f_{n} \rVert ^2 - 2c_{n+1}\langle f_n, \varphi_{n+1}\rangle + c_{n+1}^2 \leqslant \notag \\
		&\leqslant \lVert f_{n} \rVert ^2 - 2c_{n+1}t_{n+1}c\lVert f_{n} \rVert + c_{n+1}^2 = \label{agg} \\
		&=  \lVert f_{n} \rVert ^2 - c_{n+1}t_{n+1}c\lVert f_{n} \rVert - c_{n+1}(t_{n+1}c\lVert f_{n} \rVert - c_{n+1}) \leqslant \notag \\
		&\leqslant \lVert f_{n} \rVert ^2 - c_{n+1}t_{n+1}c\lVert f_{n} \rVert - c_{n+1}t_{n+1}(c\lVert f_{n} \rVert - \varepsilon) \leqslant \notag \\
		&\leqslant \lVert f_{n} \rVert ^2 - c_{n+1}t_{n+1}\varepsilon. \label{4.4}
	\end{align}
	It implies 
	\begin{align*}
		\lVert f_{n+m} \rVert ^2 &\leqslant \lVert f_{n} \rVert ^2 - \varepsilon\sum\limits_{p=1}^{m}c_{n+p}t_{n+p} \underset{m \to \infty}{\longrightarrow} -\infty{,}
	\end{align*}
	which contradicts with the assumption. Then for some $l > k$ we have $(\ref{4.3})$. Now, if the inequality  
	\begin{align*}
	\lVert f_{n} \rVert < \cfrac{\varepsilon}{c}.
	\end{align*} 
	holds for all $n \geqslant l$, the proof of the theorem is complete. So let us suppose it exists $j$ such that
	\begin{align*}
	\lVert f_{j} \rVert \geqslant \cfrac{\varepsilon}{c}.
	\end{align*} 
	and $j$ is the first step for which the above inequality holds after $l$. We observe that from \eqref{agg} we get
	\begin{align*}
	\lVert f_{j} \rVert ^2 &\leqslant \lVert f_{j-1} \rVert ^2 - 2c_{j}t_{j}c\lVert f_{j-1} \rVert + c_{j}^2 < \notag \\
	&< \cfrac{\varepsilon^2}{c^2} + \cfrac{\varepsilon^2}{c^2} = \notag \\ &= 2\,\cfrac{\varepsilon^2}{c^2}.
	\end{align*}
	From now on, $(\ref{4.4})$ holds for the sequential steps $n>j$ till a possible step $d$ (which exists) such that
	$\lVert f_{d} \rVert < \cfrac{\varepsilon}{c}$ , so during all these steps the sequence is not increasing. Moreover, since the above pattern is replicable after step $d$, we can finally deduce that 
	\begin{align*}
	\lVert f_n \rVert < \sqrt{2}\, \cfrac{\varepsilon}{c}
	\end{align*}
	for all $n\geqslant l$, and this completes the proof of the theorem.
\end{proof}

\section{Symmetrized orthonormal bases}
	
In this section we analyze the case when the dictionary is the symmetrized of an orthonormal basis (in infinite dimensional spaces), proving that when the weakening parameter is $t=1$, the hypotheses on the coefficients $C$ given in Theorem \ref{fin1} are enough to guarantee the convergence of the algorithm, while for $t<1$ the convergence fails, in the sense that there exist an example where greedy expansion does not converge.

\begin{thm}\label{val1}
    Let $\mathcal{E} = \{e_i\}_{i=1}^{\infty}$ be an orthonormal basis for a (separable) Hilbert space $H$. We consider the following dictionary for $H$
    $$
    \mathcal{D} := \mathcal{E}^\pm = \{e_i\}_{i=1}^{\infty} \cup \{-e_i\}_{i=1}^{\infty}.
    $$
    Let moreover $C$ be a sequence of positive coefficients such that
    \begin{gather*}
        \sum_{p=1}^{\infty} c_p = \infty \\
        \lim_{p \to \infty} c_p = 0.
    \end{gather*}
    Then for every $f \in H$, all the possible greedy expansions of $f$ with prescribed coefficients $C$ and weakening parameter $t \leqslant 1$ satisfy $\liminf_{n \to \infty} ||f_n|| = 0.$
\end{thm}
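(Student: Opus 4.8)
The plan is to exploit the special structure of $\mathcal D=\mathcal E^{\pm}$, which turns each greedy step into a one-coordinate update. Writing $a_i^{(n)}:=\langle f_n,e_i\rangle$, completeness of $\mathcal E$ gives $\sup_{g\in\mathcal D}\langle f_n,g\rangle=\sup_i|a_i^{(n)}|=:M_n$, and the selection rule forces $\varphi_{n+1}=\sgn(a_{j}^{(n)})\,e_{j}$ for some index $j$ with $|a_j^{(n)}|\ge tM_n$. Then $f_{n+1}$ differs from $f_n$ only in the $j$-th coordinate, whose modulus becomes $\big||a_j^{(n)}|-c_{n+1}\big|$, and orthonormality yields the energy recursion
\[
\|f_{n+1}\|^2=\|f_n\|^2-2c_{n+1}\langle f_n,\varphi_{n+1}\rangle+c_{n+1}^2\le \|f_n\|^2-2c_{n+1}tM_n+c_{n+1}^2 .
\]
(If the algorithm terminates then $\|f_m\|=0$ and there is nothing to prove, so I assume $f_n\ne0$ for all $n$.)

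First I would prove that $\liminf_n M_n=0$. Suppose not, so $M_n\ge\mu>0$ for all $n\ge N_0$. Using $c_{n+1}\to0$, pick $N_1\ge N_0$ with $c_{n+1}<t\mu$ for $n\ge N_1$; the recursion then gives $\|f_{n+1}\|^2\le\|f_n\|^2-2c_{n+1}t\mu+c_{n+1}^2<\|f_n\|^2-c_{n+1}t\mu$. Telescoping, $\|f_{N_1+m}\|^2\le\|f_{N_1}\|^2-t\mu\sum_{p=1}^{m}c_{N_1+p}\to-\infty$ because $\sum_n c_n=\infty$, contradicting $\|f_{N_1+m}\|^2\ge0$. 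Hence $\liminf_n M_n=0$.

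Next I would upgrade this to $M_n\to0$. Fix $\epsilon>0$ and choose $N_\epsilon$ with $c_m<\epsilon$ for all $m>N_\epsilon$; by the previous step there is $n^\ast>N_\epsilon$ with $M_{n^\ast}<\epsilon$. I claim $M_n<\epsilon$ for every $n\ge n^\ast$, by induction. If all coordinates of $f_n$ have modulus $<\epsilon$ and $n+1>N_\epsilon$, then the only coordinate that changes, the $j$-th, has new modulus $\big||a_j^{(n)}|-c_{n+1}\big|<\epsilon$ since both $|a_j^{(n)}|$ and $c_{n+1}$ lie in $[0,\epsilon)$, while all other coordinates are unchanged; hence $M_{n+1}<\epsilon$. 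Thus $\limsup_n M_n\le\epsilon$ for every $\epsilon>0$, i.e. $M_n\to0$.

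It remains to pass from $M_n\to0$ to $\liminf_n\|f_n\|=0$, and this is the step I expect to be the main obstacle, because in infinite dimensions smallness of the largest coordinate does not control the full $\ell^2$-energy: the energy can spread over arbitrarily many small coordinates (this is exactly why only the $\liminf$, and not the limit, can be asserted for $t\le1$). I would argue by contradiction, assuming $\|f_n\|^2\ge\alpha^2>0$ for all large $n$. Since $M_n\to0$, for every fixed $K$ the head energy satisfies $\sum_{i\le K}(a_i^{(n)})^2\le KM_n^2\to0$, so the energy asymptotically escapes into the tail $\{i>K\}$ for every $K$; equivalently $f_n\rightharpoonup0$ weakly while $\|f_n\|\ge\alpha$. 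The heart of the matter is to rule out that the dynamics can pump and sustain a fixed amount of energy at ever higher indices. The mechanism I would track is the dichotomy in the recursion: a step that increases the energy must overshoot, i.e. satisfy $|a_j^{(n)}|<c_{n+1}/2$, which forces $M_n<c_{n+1}/(2t)$ and bounds the gain by at most $c_{n+1}^2$, whereas every non-overshoot reduction decreases the energy by at least $c_{n+1}^2$. Balancing the total energy gained on overshoot steps against that lost on the remaining steps, and using $\sum_n c_n=\infty$ together with $c_n\to0$ and $M_n\to0$ to bound the number and size of the overshoot steps, should force $\|f_n\|^2$ below $\alpha^2$ infinitely often, contradicting the assumption. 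Making this gain/loss balance quantitative — equivalently, proving that the algorithm cannot indefinitely redistribute a fixed energy into arbitrarily high coordinates — is the crux of the argument.
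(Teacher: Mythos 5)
Your preparatory steps are correct, and they essentially reproduce (and slightly strengthen) the paper's intermediate estimate: the energy recursion plus telescoping gives $\liminf_n M_n=0$, which is the paper's equation \eqref{1.1}, and your one-coordinate induction upgrading this to $\lim_n M_n=0$ is valid (the paper only records the $\liminf$ statement). However, the theorem is not proved: the passage from $M_n\to 0$ to $\liminf_n\Vert f_n\Vert=0$, which you yourself call ``the crux'', is left as a plan rather than an argument, and that passage is where the entire content of the theorem lies (the paper's Theorem \ref{val3} shows that $M_n\to 0$ coexists with $\limsup_n\Vert f_n\Vert\geqslant 1$, so coordinate-wise smallness says almost nothing about the energy). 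This is a genuine gap.

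Moreover, the overshoot gain/loss balancing you sketch cannot be completed with the quantities you track. First, in the construction of Theorem \ref{val3} the energy is pumped up to $1$ and brought back down infinitely often, so the cumulative gain over overshoot steps (and the cumulative loss over the others) is infinite: there is no global bound on ``the number and size of the overshoot steps''. Second, and more fundamentally, the invariants you use — the recursion $\Vert f_{n+1}\Vert^2=\Vert f_n\Vert^2-2c_{n+1}s_n+c_{n+1}^2$ with $tM_n\leqslant s_n\leqslant M_n$, together with $M_n\to 0$, $c_n\to 0$, $\sum_n c_n=\infty$ — are consistent with the energy being conserved forever: take $s_n=c_{n+1}/2$ and $M_n=c_{n+1}/(2t)$, so that every step is energy-neutral and all of the above constraints hold with $\Vert f_n\Vert^2\equiv\alpha^2$. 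Hence no argument using only these facts can reach a contradiction; one must exploit the coordinate structure of the initial element $f$. That is exactly what the paper does: order the coordinates so that $\{|x_i|\}$ is non increasing, use $\liminf_n\sqrt{n}\,|x_n|=0$ (valid for $\ell^2$ sequences) to pick $n$ with $|x_n|\leqslant\varepsilon/\sqrt{n}$, and let $k$ be the \emph{first} step at which $|x_n|\geqslant t\sup_{e\in\D}\langle f_{k-1},e\rangle$ (it exists by \eqref{1.1}). By minimality of $k$ and induction, no coordinate $m\geqslant n$ can have been selected before step $k$, since its modulus stays equal to $|x_m|\leqslant|x_n|<t\sup_{e\in\D}\langle f_{j-1},e\rangle$ at every earlier step $j$; therefore at time $k-1$ the tail energy is the original one, $\sum_{m>n}x_m^2$, while each of the first $n$ coordinates is bounded by $\sup_{e\in\D}\langle f_{k-1},e\rangle\leqslant|x_n|/t\leqslant\varepsilon/(t\sqrt{n})$, so the head energy is at most $\varepsilon^2/t^2$. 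This yields $\Vert f_{k-1}\Vert^2\leqslant\varepsilon^2/t^2+\sum_{m>n}x_m^2$, which is arbitrarily small; this first-hitting-time argument, resting on the untouched $\ell^2$ tail, is the ingredient your proposal is missing.
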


\begin{proof}
In the following we write for simplicity the dictionary as
$$
\mathcal{D} = \{e_i\}_{i\in\Z\setminus\{ 0\}}
$$
where we intend $e_{-i} = -e_i$ for every $i \geqslant 1$. Since $\{e_i\}_{i=1}^{\infty}$ is an orthonormal basis, we can write $f$ as $$f = \sum_{i=1}^{\infty} x_i e_i$$ for a unique choice of the coefficients $x_i$. Without loss of generality we can assume that the sequence $\{|x_i|\}_{i=1}^{\infty}$ is non increasing. Similarly, for every remainder $f_n$ we can write $$f_n = \sum_{i=1}^{\infty} x_{i, n} e_i$$ for a unique choice of the coefficients $x_{i, n}$. For convenience we put $x_{i, 0} = x_i$.

We notice that the algorithm, at each step $m$, modifies one single coefficient of the orthonormal expansion of the remainder $f_{m-1}$.

Let $e_{k_m}$ be an element from the dictionary that is touched by the algorithm at step $m$. We note that  

\begin{align*}
    \lVert f_n \rVert^2 &= \langle f_n, f_n\rangle = \langle f_{n-1} - c_n e_{k_n}, f_{n-1} - c_n e_{k_n}\rangle = \\
    &= \lVert f_{n-1} \rVert^2 - 2c_n\langle f_{n-1}, e_{k_n}\rangle + c_n^2 = \ldots =\\
    &= \lVert f_0 \rVert^2 - 2\sum_{m=1}^n c_m |x_{m-1, |k_m|}| + \sum_{m=1}^n c_m^2 \leqslant \\
    &\leqslant \lVert f_0 \rVert^2 - 2t\sum_{m=1}^n c_m \sup_{e \in \mathcal{D}} \langle f_{m-1}, e\rangle + \sum_{m=1}^n c_m^2,
\end{align*}
which means that 
\begin{align}\label{1.1}
    \liminf_{n \to \infty}\sup_{e \in \mathcal{D}} \langle f_n, e\rangle = 0
\end{align}
due to conditions of Theorem \ref{val1}.

As $\{x_i\}_{i=1}^{\infty} \in l^2$, we have that $\liminf_{n \to \infty} \sqrt{n} |x_n| = 0$.

Let now $\varepsilon$ be an arbitrary positive number, and let $n$ be such that $|x_n| \leqslant \frac{\varepsilon}{\sqrt{n}}$.

Also, let $k$ be the first step such that $$|x_n| = |x_{n, k-1}| \geqslant t \sup_{e \in \mathcal{D}} \langle f_{k-1}, e\rangle;$$ such a number exists due to (\ref{1.1}). Then $x_n$ has not been modified in the first $k-1$ steps; since $\{ |x_i|\}_{i=1}^\infty$ is non increasing, also $x_m$, $m\geqslant n$, has not been modified in the first $k-1$ steps, and so $x_{m,k-1}=x_m$ for every $m\geqslant n$. Now, for every $m$ we have the following estimation $$|x_{m, k-1}| \leqslant \sup_{e \in \mathcal{D}} \langle f_{k-1}, e\rangle \leqslant \frac{|x_n|}{t} \leqslant \frac{\varepsilon}{t\sqrt{n}}.$$ Therefore, using the monotonicity of the sequence $\{|x_i|\}_{i=1}^{\infty}$, we have
\begin{align*}
    ||f_{k-1}||^2 &= \sum_{m=1}^n x_{m, k-1} ^ 2 + \sum_{m=n+1}^\infty x_{m, k-1} ^ 2 \leqslant \\
    &\leqslant \frac{\varepsilon^2}{t^2} +  \sum_{m=n+1}^\infty x_{m}^2.
\end{align*}
Since $\varepsilon>0$ is arbitrary and $n \rightarrow \infty$, we get that $\liminf_{n \to \infty} ||f_n|| = 0$, which completes the proof of Theorem \ref{val1}.

\end{proof}

\begin{thm}\label{val2}
    Let $\mathcal{E} = \{e_i\}_{i=1}^{\infty}$ be an orthonormal basis for a (separable) Hilbert space $H$. We consider the following dictionary for $H$ $$\mathcal{D} := \mathcal{E}^\pm = \{e_i\}_{i=1}^{\infty} \cup \{-e_i\}_{i=1}^{\infty}.$$ Let moreover $C$ be a sequence of positive coefficients such that
    \begin{gather*}
        \sum_{p=1}^{\infty} c_p = \infty \\
        \lim_{p \to \infty} c_p = 0.
    \end{gather*}
    Then for every $f \in H$, all the possible greedy expansions of $f$ with prescribed coefficients $C$ and weakening parameter $t = 1$ converge to $f$.
\end{thm}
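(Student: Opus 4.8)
The plan is to reuse the conclusion of Theorem \ref{val1}, which applies here since $t=1\le 1$, so that $\liminf_{n\to\infty}\|f_n\|=0$ is already available; it then remains only to promote this to $\lim_{n\to\infty}\|f_n\|=0$, equivalently $\limsup_n\|f_n\|^2=0$. I keep the notation of the previous proof, writing $f_n=\sum_i x_{i,n}e_i$ and $M_n:=\sup_{e\in\mathcal D}\langle f_n,e\rangle=\max_i|x_{i,n}|$. The decisive feature of the weakening parameter $t=1$ is that the selection rule forces $\langle f_{m-1},\varphi_m\rangle=M_{m-1}$, so at step $m$ the algorithm subtracts $c_m$ from a coordinate $k_m$ of maximal modulus $M_{m-1}$. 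This gives the exact one-step identity
\begin{equation*}
\|f_m\|^2-\|f_{m-1}\|^2=c_m^2-2c_mM_{m-1}=c_m(c_m-2M_{m-1}),
\end{equation*}
so $\|f_m\|^2>\|f_{m-1}\|^2$ can occur only at an \emph{overshoot} step, i.e.\ when $c_m>2M_{m-1}$; and exactly at such a step the modulus of the attacked coordinate becomes $c_m-M_{m-1}>M_{m-1}$, so $k_m$ turns into the \emph{strict} maximiser among all coordinates.

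First I would record the self-correcting nature of these overshoots, which is what $t=1$ buys us. Suppose an overshoot occurs at step $m$. Because $k_m$ is now the strict maximum and $t=1$, the algorithm is forced to attack precisely $k_m$ at the subsequent steps — call this the \emph{run} issued by $m$ — until the modulus of $x_{k_m,\cdot}$ drops back below the previous second-largest modulus $S_{m-1}\le M_{m-1}$; that the run terminates follows from $\sum_n c_n=\infty$ together with $c_n\to 0$ (repeatedly subtracting the $c_j$ drives that coordinate down). Throughout the run only the single coordinate $k_m$ is modified, so $\|f\|^2$ changes only through $x_{k_m,\cdot}^2$; telescoping and $|x_{k_m,\cdot}|\le S_{m-1}$ at the end of the run yield
\begin{equation*}
\|f_{\mathrm{end}}\|^2=x_{k_m,\mathrm{end}}^2+\bigl(\|f_{m-1}\|^2-M_{m-1}^2\bigr)\le S_{m-1}^2+\|f_{m-1}\|^2-M_{m-1}^2\le\|f_{m-1}\|^2,
\end{equation*}
that is, the overshoot is entirely reabsorbed by its own run, and $\|f\|^2$ returns to a value not exceeding its pre-overshoot level.

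To turn this into the limit statement I would fix $\varepsilon>0$, use $c_n\to0$ to choose $N_0$ with $\gamma:=\sup_{n\ge N_0}c_n$ so small that $\gamma^2<\varepsilon$, and use $\liminf\|f_n\|=0$ to pick $T\ge N_0$ with $\|f_T\|^2<\varepsilon$. Every step after $T$ is either a non-overshoot step (which by the one-step identity does not increase $\|f\|^2$) or the start of a run as above. Along a run issued at $m\ge N_0$ the modulus of $x_{k_m,\cdot}$ stays $\le\max(M_m,\gamma)\le\gamma$, since $M_m=c_m-M_{m-1}\le c_m\le\gamma$ and one checks $M_j\le\max(M_{j-1},c_j)$; hence during the run $\|f\|^2$ exceeds its pre-run value $\|f_{m-1}\|^2$ by at most $\gamma^2$, and it returns to $\le\|f_{m-1}\|^2$ at the end. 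Consequently the ``baseline'' value of $\|f\|^2$ just before successive runs is non-increasing and stays $\le\|f_T\|^2$, while within runs $\|f\|^2$ overshoots this baseline by at most $\gamma^2$. Therefore $\|f_n\|^2\le\|f_T\|^2+\gamma^2<2\varepsilon$ for all $n\ge T$, and letting $\varepsilon\to0$ gives $\|f_n\|\to0$, i.e.\ $G_n\to f$.

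The main obstacle is precisely the run/excursion bookkeeping that makes the heuristic ``with $t=1$ every norm increase is immediately paid back'' rigorous: one must verify that each run indeed terminates (this is where $\sum c_n=\infty$ enters), control the internal fluctuations of the attacked coordinate when it bounces near $0$ through repeated sub-overshoots (handled uniformly by $\gamma$), and treat the degenerate case in which $k_m$ is the only nonzero coordinate, where the run never ends but $\|f_n\|=|x_{k_m,n}|\to0$ follows directly from $M_n\le\max(M_{n-1},c_n)$ and $\liminf\|f_n\|=0$. I expect the contrast with the weakening parameter $t<1$ (Theorem \ref{val3}) to be illuminating here: for $t<1$ the algorithm is no longer compelled to keep hitting the strict maximiser, the self-correcting run can be avoided, and this is exactly the mechanism whose loss makes convergence fail.
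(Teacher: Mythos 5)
Your proposal is correct, and its first half coincides exactly with the paper's proof: both invoke Theorem \ref{val1} (applicable since $t=1\leqslant 1$) to obtain $\liminf_{n\to\infty}\Vert f_n\Vert=0$. The difference lies in the upgrade from $\liminf$ to $\lim$: the paper disposes of this step by citing Lemma 4.3 of \cite{articolo4}, a general lemma valid for arbitrary symmetric dictionaries under $t=1$ and $c_n\to 0$, whereas you prove the upgrade from scratch by exploiting the coordinate structure of the symmetrized orthonormal basis. Your mechanism checks out: with $t=1$ the algorithm must hit a coordinate of maximal modulus $M_{m-1}$, so a norm increase occurs only when $c_m>2M_{m-1}$, after which the hit coordinate (new modulus $c_m-M_{m-1}\leqslant c_m$) is the strict maximizer and the algorithm is locked onto it until its modulus falls back to the second-largest value $S_{m-1}$; Parseval then gives $\Vert f_{\mathrm{end}}\Vert^2\leqslant\Vert f_{m-1}\Vert^2-M_{m-1}^2+S_{m-1}^2\leqslant\Vert f_{m-1}\Vert^2$, the run terminates because eventually $c_j<S_{m-1}$ forces a decrease by $c_j$ at every step and $\sum_j c_j=\infty$ (and when $S_{m-1}=0$ your separate single-coordinate argument via $M_n\leqslant\max(M_{n-1},c_n)$ applies), while during the run the attacked coordinate stays $\leqslant\gamma=\sup_{n\geqslant N_0}c_n$, so the excursion above the baseline is at most $\gamma^2$. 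The $\varepsilon$-assembly also survives the one case your write-up glosses over, namely when the index $T$ with $\Vert f_T\Vert^2<\varepsilon$ falls inside a run issued at some earlier $m$: writing $\mu_j$ for the attacked coordinate's modulus, one has $\Vert f_j\Vert^2=\Vert f_T\Vert^2-\mu_T^2+\mu_j^2$ and $\mu_j\leqslant\max(\mu_T,\gamma)$ for $j>T$, so the bound $\Vert f_n\Vert^2\leqslant\Vert f_T\Vert^2+\gamma^2$ still holds; this is a detail to spell out, not a gap. As for what each route buys: the paper's citation is shorter and rests on a lemma requiring no structure of the dictionary beyond symmetry, while your argument is self-contained within the paper and isolates precisely the self-correcting feature of $t=1$ (forced attack on the strict maximizer created by an overshoot) whose loss for $t<1$ is exactly what the counterexample of Theorem \ref{val3} exploits, making the two results' contrast transparent.
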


\begin{proof}
The proof is the direct implication of Theorem \ref{val1} and Lemma 4.3 from \cite{articolo4}.
\end{proof}

\begin{rem}
	The result of Theorem \ref{val2} holds also for finite dimensional spaces, since in the finite dimensional case the more general result of Theorem \ref{fin1} holds.
\end{rem}

\begin{thm}\label{val3}
Let $H$ be a (separable, infinite dimensional) Hilbert space, $\mathcal{D}$ a dictionary as in Theorem \ref{val2}, and $t\in(0,1)$. There exist an element $f \in H$ and a sequence $c_n \rightarrow 0$, with $\sum_{n=1}^\infty c_n=\infty$, such that a greedy expansion of $f$ in the dictionary $\mathcal{D}$ with the prescribed coefficients $\{c_n\}_{n=1}^{\infty}$ and weakening parameter $t$ does not converge to $f$.
\end{thm}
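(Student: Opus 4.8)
The plan is to construct the counterexample explicitly. Writing $f=\sum_i x_i e_i$ and $f_n=\sum_i x_{i,n}e_i$ as in the proof of Theorem \ref{val1}, recall that with the symmetrized basis the weak-greedy rule at step $m$ forces $\varphi_m=\sgn(x_{k,m-1})e_k$ for some index $k$ with $|x_{k,m-1}|\ge t\,\sup_{g}\langle f_{m-1},g\rangle=t\max_i|x_{i,m-1}|$, after which $x_{k,m}=x_{k,m-1}-c_m\sgn(x_{k,m-1})$; that is, the chosen coordinate is moved toward (and possibly past) $0$ by the amount $c_m$. Since Theorem \ref{val1} already guarantees $\liminf_{n}\|f_n\|=0$ under the very same hypotheses $\sum c_n=\infty$, $c_n\to0$, it suffices to exhibit one admissible realization for which $\|f_n\|\ge\delta$ for infinitely many $n$ and some fixed $\delta>0$; together these give $\liminf\|f_n\|=0<\delta\le\limsup\|f_n\|$, hence $G_n\not\to f$.

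The engine of the construction is a pumping gadget that exploits the slack $t<1$. Suppose $P$ coordinates all carry the same value $v>0$ and are the current maximum. Pick a coefficient $c$ with $2v<c\le v(1+t)/t$, an interval that is nonempty exactly because $t<1$, and reduce the $P$ coordinates one at a time. The first reduction sends a coordinate from $v$ to $v-c$, of magnitude $c-v>v$, which becomes the new maximum; each still-untouched coordinate satisfies $v\ge t(c-v)$ by the choice of $c$, so it remains admissible. After this single pass all $P$ coordinates hold the common value $c-v$, and since $c-v>v$ the block energy $Pv^2$ has strictly increased, with gain factor $(c-v)^2/v^2$ as large as $1/t^2$. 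Iterating the gadget with a geometrically increasing run of coefficients, after $R$ passes the common value grows from $v_0$ to about $v_0 t^{-R}$, so the block norm reaches $\sqrt{P}\,v_0t^{-R}$; choosing $P,R,v_0$ so that $\sqrt P\,v_0t^{-R}\approx\delta$ we reach the fixed level $\delta$ while the pre-loaded energy $Pv_0^2\approx\delta^2t^{2R}$ is as small as we like, the largest coefficient used being $\approx\delta/\sqrt P$.

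I would then assemble $f$ and $\{c_n\}$ from infinitely many such blocks placed on pairwise disjoint index sets $S_j$, $|S_j|=P_j$, with $P_j\to\infty$ and pass-counts $R_j\to\infty$ (for instance $P_j=4^{j}$, $R_j=j$). Each block is pre-loaded at a common value $v_0^{(j)}$ and pumped up to norm $\ge\delta$ as above, so its peak recurs infinitely often. Between consecutive blocks I append a decreasing run of coefficients that drives the just-pumped block $j$ back down to a negligible value (a one-dimensional-type decay, admissible because those coordinates are then the maximum), so that the block $j+1$ coordinates become the maximum and the next pump can begin. The pre-loaded masses $P_j (v_0^{(j)})^2\approx\delta^2t^{2R_j}$ are summable, whence $f\in H$; the block-maximal coefficients $\approx\delta/\sqrt{P_j}\to0$, whence $c_n\to0$; and the infinitely many long blocks make $\sum_n c_n=\infty$.

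The main obstacle is bookkeeping the admissibility inequality \eqref{finale1} at \emph{every} step simultaneously with the three global constraints on $\{c_n\}$ and with $f\in H$. Concretely one must check that inside a pump-pass the not-yet-raised coordinates never fall below $t$ times the current maximum, which is precisely what the window $2v<c\le v(1+t)/t$ secures and which degenerates as $t\uparrow1$, matching the failure of the statement at $t=1$ (Theorem \ref{val2}); and that the coordinates of the other blocks never interfere, arranged by ordering the blocks with strictly decreasing pre-load values and fully descending block $j$ before activating block $j+1$. Verifying that a single coefficient sequence can realize the increasing pump-runs and the decreasing descent-runs of all blocks while remaining globally null and non-summable is the delicate part; once these four requirements are reconciled, the peaks at level $\delta$ together with Theorem \ref{val1} immediately yield non-convergence.
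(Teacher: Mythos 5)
Your proposal is correct and is essentially the paper's own construction: your pumping gadget with the window $2v<c\le v(1+t)/t$ is exactly the paper's sign-flip trick (the paper simply takes the endpoint $c=v+v/t$, turning $t^h$ into $-t^{h-1}$), and your disjoint blocks of growing size $P_j$, pumped to a fixed norm level and then zeroed out before the next block is activated, mirror the paper's groups of $k+j$ components equal to $t^{k+j}$ pumped to subnorm $1$ and then annihilated. The only inessential differences are that the paper fixes concrete parameters ($\delta=1$, block values $t^{k+j}$) rather than leaving them generic, and that your appeal to Theorem \ref{val1} for $\liminf_n\Vert f_n\Vert=0$ is not needed, since $\limsup_n\Vert f_n\Vert\geqslant\delta>0$ already rules out convergence.
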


\begin{proof}

Let $k\in\N$, with $k>1$, be such that $t^k<\frac{1}{\sqrt{k}}$, and consider the element $f\in H$ whose orthonormal basis components are
\begin{align*}
\bigl( \underbrace{t^k, \dots,t^k}_{k\ \text{times}}, \dots, \underbrace{t^{k+j},\dots,t^{k+j}}_{k+j\ \text{times}}, ... \bigr),
\end{align*}
where the components are subdivided in groups, and the $j$-th group contains $k+j$ components, all equal to $t^{k+j}$.

In order to prove the theorem, we build the sequence $C = \{c_n\}_{n=1}^{\infty}$ so that the following conditions hold:
\begin{itemize}
    \item[(i)] $\limsup_{n \to \infty}||f_n|| \geqslant 1$,
    \item[(ii)] $c_n \rightarrow 0 \quad (n \rightarrow \infty)$,
    \item[(iii)] $\sum\limits_{n=1}^{\infty}c_n = \infty$.
\end{itemize} 

We construct $c_n$ consequently for each group, starting from the first one, in the following way:\\
--- Increase the subnorm of the group to one.\\
--- Make the subnorm of the group equal to zero.\\
--- Move to the next group

Consider a group of components
$$
\bigl(\underbrace{t^h,\dots,t^h}_{h\ \text{times}}\bigr),\quad h\geqslant k.
$$
First, we take any component of the group and choose $c_i$ in the coefficient sequence $C$ in order that the component changes its sign and is multiplied by $\frac{1}{t}$ (so, $t^h$ becomes $-t^{h-1}$); then we do the same for the remaining elements of the group (it is possible since we are applying the algorithm with weakening parameter $t$). We repeat this procedure on the group until each component of the group in the remainder has modulus between $\frac{t}{\sqrt{h}}$ and $\frac{1}{\sqrt{h}}$.

At this point we choose the next coefficients to consequently change all the components of the group to $\frac{1}{\sqrt{h}}$ (or $-\frac{1}{\sqrt{h}}$), that means that at this step the subnorm of the group equals one, and so condition (i) holds.

Finally we choose the next coefficients equal to $\frac{1}{\sqrt{h}}$ in order that all the components of the group become $0$. Then we pass to the next group of components and repeat the same procedure.

We observe that all the coefficients $c_i$, in these steps, are less than $\frac{2}{\sqrt{h}}$, and since $h\to\infty$ we have that condition (ii) holds. Moreover, since for every $h$ we choose at least one coefficient equal to $\frac{1}{\sqrt{h}}$ we have that condition (iii) is satisfied, and this completes the proof.
\end{proof}

\section{Direct sum of Hilbert spaces}

In this section we prove that the greedy algorithm with prescribed coefficients in infinite dimensional Hilbert spaces is still convergent (with the same hypotheses on the coefficients as in Theorem \ref{val2}) if we consider as a dictionary the symmetrized of the union of an orthonormal basis $\mathcal{E}$ and an arbitrary number of elements that are linear combinations of a finite subset of $\mathcal{E}$. In order to do this we prove a result on greedy algorithm in direct sums of Hilbert spaces.
 
Recall that, given two Hilbert spaces $H_1$ and $H_2$ their direct sum is defined as
\[
H_1\oplus H_2=\{(x_1,x_2): x_1\in H_1,\, x_2\in H_2\}\; ;
\]
$H_1\oplus H_2$ is a Hilbert space with inner product given by
\[
\langle (x_1,y_1),(x_2,y_2)\rangle_{H_1\oplus H_2}:=\langle x_1,x_2\rangle_{H_1}+\langle y_1,y_2\rangle_{H_2}\; .
\]
Let $H$ be a Hilbert space and $M\subset H$ a closed subspace of $H$. It is well-known that
\begin{equation}\label{isom-projection}
	H\cong M\oplus M^\perp,
\end{equation}
where $M^\perp\subset H$ is the orthogonal of $M$ in $H$. Indeed, from the Projection Theorem we have that each $x\in H$ can be uniquely decomposed as $x=Px+Qx$, with $Px\in M$ and $Qx\in M^\perp$. Then the map
	\begin{equation}\label{isom-alpha}
	\begin{split}
	\alpha\, :\;& H\rightarrow M\oplus M^\perp\\
	&x\mapsto (Px,Qx)
	\end{split}
	\end{equation}
is an isometric isomorphism.

\noindent Of course we can consider the direct sum of a finite number of Hilbert spaces.

Now we want to prove that the convergence of the greedy algorithm is maintained by isometric isomorphisms, in the following sense.

\begin{lem}\label{sd9}
	Let $H$ be a Hilbert space, $\D$ a dictionary for $H$, and $C$ a prescribed sequence of coefficients. We suppose that, for every $f\in H$, all the possible greedy expansions of $f$ with respect to the dictionary $\D$ with prescribed coefficients $C$ and weakening parameter $t=1$ converge to $f$. Let moreover $K$ be another Hilbert space and $\alpha: H\rightarrow K$ be an isometric isomorphism. Define $\mathcal{E}:=\{\alpha(\varphi):\varphi\in\D\}$. Then $\mathcal{E}$ is a dictionary for $K$ (symmetric if $\D$ is symmetric), and for every $g\in K$ all the possible greedy expansions of $g$ with respect to the dictionary $\mathcal{E}$ with prescribed coefficients $C$ and weakening parameter $t=1$ converge.
\end{lem}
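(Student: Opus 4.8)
The plan is to transport the greedy algorithm from $H$ to $K$ through $\alpha$, exploiting that an isometric isomorphism preserves norms and inner products. First I would check that $\mathcal{E}$ really is a dictionary for $K$. Normalization is immediate: for $\varphi\in\D$ we have $\Vert\alpha(\varphi)\Vert_K=\Vert\varphi\Vert_H=1$. For completeness, since $\alpha$ is linear, bijective and bicontinuous, it commutes with forming spans and with closures, so $\overline{\lc\{\mathcal{E}\}}=\overline{\alpha(\lc\{\D\})}=\alpha(\overline{\lc\{\D\}})=\alpha(H)=K$. Finally, if $\D$ is symmetric then for $\alpha(\varphi)\in\mathcal{E}$ we have $-\alpha(\varphi)=\alpha(-\varphi)\in\mathcal{E}$, so $\mathcal{E}$ is symmetric as well.

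The heart of the argument is a step-by-step correspondence between greedy expansions in $K$ and in $H$. Given $g\in K$, set $f:=\alpha^{-1}(g)\in H$ and take an arbitrary greedy expansion of $g$ in $K$, with selected elements $\psi_m\in\mathcal{E}$ and remainders $g_m$. Writing $\psi_m=\alpha(\varphi_m)$ with $\varphi_m\in\D$ and $f_m:=\alpha^{-1}(g_m)$, I would verify by induction, using linearity of $\alpha^{-1}$, that $f_0=f$ and $f_m=f_{m-1}-c_m\varphi_m$. The crucial point is that $\alpha$ preserves the inner product, so $\langle f_{m-1},\varphi\rangle_H=\langle g_{m-1},\alpha(\varphi)\rangle_K$ for every $\varphi\in\D$; taking suprema over $\D$, which under $\alpha$ correspond exactly to suprema over $\mathcal{E}$, shows that the selection inequality \eqref{finale1} with $t=1$ holds in $H$ for $\varphi_m$ precisely when it holds in $K$ for $\psi_m$. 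Hence $\{\varphi_m\}$ with remainders $\{f_m\}$ is a legitimate greedy expansion of $f$ in $H$, and the stopping condition $g_{m-1}=0$ transfers to $f_{m-1}=0$.

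It then remains only to read off convergence. By hypothesis every greedy expansion of $f$ in $H$ converges to $f$, which is to say $\Vert f_m\Vert_H\to 0$. Since $\alpha$ is an isometry, $\Vert g_m\Vert_K=\Vert\alpha(f_m)\Vert_K=\Vert f_m\Vert_H\to 0$, and because $g_m=g-\sum_{i=1}^m c_i\psi_i$ this is exactly the assertion that the chosen greedy expansion of $g$ converges to $g$. As the greedy expansion of $g$ was arbitrary, the conclusion follows.

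I do not expect a genuine obstacle here. The only points deserving care are the bookkeeping that the correspondence $\psi_m\leftrightarrow\varphi_m$ is faithful, so that every greedy expansion of $g$ in $K$ really arises from one of $f$ in $H$ (and thus falls under the hypothesis), and the verification that the suprema defining the selection rule agree under $\alpha$; both rest entirely on $\alpha$ being a bijective isometry.
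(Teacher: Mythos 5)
Your proof is correct and follows essentially the same route as the paper's: pull the expansion of $g$ back to $H$ via $\alpha^{-1}$, use preservation of inner products to show the selection rule (and stopping condition) transfers, invoke the hypothesis on $H$, and push convergence forward by isometry. Your write-up is in fact slightly more careful than the paper's at the point where the selection condition is transferred (the paper speaks of ``applying $\alpha^{-1}$ to both sides'' of a scalar identity, whereas you correctly phrase it as preservation of inner products and of suprema), but this is a matter of exposition, not of substance.
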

\begin{proof}
	The fact that $\mathcal{E}$ is a dictionary (symmetric if $\D$ is symmetric) is straightforward since $\alpha$ is an isometric isomorphism.
	
	Now let $g\in K$ and consider a greedy expansion of $g$, that is an expression of the kind
	\[
	\sum_{p=1}^\infty c_p\psi_p
	\]
	with $\psi_p:=\alpha(\varphi_p)\in\mathcal{E}$. We observe that at the $m$-th step the greedy approximation is obtained by choosing $\psi_m\in\D$ in such a way that
	\[
	\langle g-\sum_{p=1}^{m-1}c_p\psi_p,\psi_m\rangle = \sup_{\psi\in\mathcal{E}}\langle g-\sum_{p=1}^{m-1}c_p\psi_p,\psi\rangle\; .
	\]
	Applying $\alpha^{-1}$ to both sides we then obtain
	\begin{equation}\label{sd1}
	\langle f-\sum_{p=1}^{m-1}c_p\varphi_p,\varphi_m\rangle = \sup_{\varphi\in\D}\langle f-\sum_{p=1}^{m-1}c_p\varphi_p,\varphi\rangle\; .
	\end{equation}
	with $f=\alpha^{-1}(g)$. This holds for every positive integer $m$, so by definition it follows that
	\[
	\sum_{p=1}^\infty c_p\varphi_p
	\]
	is a greedy expansion of $f$ with prescribed coefficients $C$ with respect to the dictionary $\D$. Then by hypotheses we know that
	\[
	\lim_{m\to \infty}\sum_{p=1}^m c_p\varphi_p=f
	\]
	and so, since $\alpha$ is an isometric isomorphism,
	\[
	\lim_{m\to\infty}\sum_{p=1}^m c_p\psi_p=g\; .
	\]
	Then the considered greedy expansion of $g$ converges to $g$, and the proof is completed.
\end{proof}

Now we analyze greedy expansions in direct sums of Hilbert spaces.

\begin{thm}\label{sd8}
	Let $H_j$ be a Hilbert space and $\D_j\subset H_j$ a symmetric dictionary for $H_j$, for $j\in\{1,\dots, N\}$. Suppose that for every $j$ and for every sequence of coefficients $C$ satisfying $\lim_{p\to\infty}c_p=0$ and $\sum_{p=1}^\infty c_p=\infty$, we have that for every $h\in H_j$ all the possible greedy expansions of $h$ with respect to the dictionary $\D_j$ with prescribed coefficients $C$ and weakening parameter $t=1$ converge. Let moreover
	$$
	H=H_1\oplus H_2\oplus\dots\oplus H_N,
	$$
	and consider in $H$ the set
	\[
	\D=\{(\varphi^1,0,\dots,0)\}_{\varphi^1\in\D_1}\cup\{(0,\varphi^2,0,\dots,0)\}_{\varphi^2\in\D_2}\cup\dots\cup\{(0,\dots,0,\varphi^N)\}_{\varphi^N\in\D_N}\; .
	\]
	Then $\D$ is a symmetric dictionary for $H$; moreover for every $f\in H$ and for every sequence of coefficients $C$ as above, all the greedy expansions of $f$ with respect to the dictionary $\D$ with prescribed coefficients $C$ and weakening parameter $t=1$ converge to $f$.
\end{thm}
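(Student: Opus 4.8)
The plan is to exploit the fact that each element of $\D$ is supported in a single summand $H_j$, so that one run of the greedy algorithm in $H$ splits into $N$ independent greedy processes, one per coordinate, and then to combine the per-summand convergence hypothesis with a global energy estimate to control the coordinates that receive ``too few'' coefficients. First I would dispose of the elementary claim that $\D$ is a symmetric dictionary: every $(0,\dots,\varphi^j,\dots,0)$ has norm $\lVert\varphi^j\rVert_{H_j}=1$, symmetry is inherited from each $\D_j$, and completeness holds since $\overline{\lc\{\D\}}=\overline{\lc\{\D_1\}}\oplus\dots\oplus\overline{\lc\{\D_N\}}=H$. Then I fix $f=(f^1,\dots,f^N)$ and an arbitrary greedy expansion $\sum_p c_p\varphi_p$; if it terminates we are done, so I assume it runs forever and write $f_m=(f_m^1,\dots,f_m^N)$.

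The first substantive step is the decomposition. Since each $\varphi_p$ is supported in one coordinate, I let $A_j$ be the set of steps that touch coordinate $j$; these sets partition the steps, the coordinate $f_m^j$ changes only at steps of $A_j$, and for $\varphi_p=(0,\dots,\varphi_p^{(j)},\dots,0)$ one has $\langle f_{p-1},\varphi_p\rangle=\langle f_{p-1}^j,\varphi_p^{(j)}\rangle_{H_j}$. The $t=1$ selection rule in $H$ then forces, at each $p\in A_j$, the equality $\langle f_{p-1}^j,\varphi_p^{(j)}\rangle_{H_j}=\sup_{\psi\in\D_j}\langle f_{p-1}^j,\psi\rangle_{H_j}$, because the global supremum dominates the coordinate-$j$ supremum while $\varphi_p^{(j)}\in\D_j$ yields the reverse inequality. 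Hence, listing $A_j$ in increasing order, the coordinate-$j$ remainders form a genuine greedy expansion of $f^j$ in $H_j$ with weakening parameter $1$ and with the subsequence of coefficients $C^{(j)}=(c_p)_{p\in A_j}$.

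The second step is a global energy estimate giving $\liminf_m\sup_{g\in\D}\langle f_{m-1},g\rangle=0$, exactly as in the telescoping argument of Theorem \ref{val1}: setting $s_m=\sup_{g\in\D}\langle f_{m-1},g\rangle\ge 0$, the identity $\lVert f_n\rVert^2=\lVert f\rVert^2+\sum_{m=1}^n(c_m^2-2c_m s_m)$ together with $\lVert f_n\rVert^2\ge 0$ shows that $\liminf_m s_m=\delta>0$ would give $c_m^2-2c_m s_m\le-\tfrac{\delta}{2}c_m$ for large $m$ (using $c_m\to 0$), hence $\lVert f_n\rVert^2\to-\infty$ by $\sum c_m=\infty$, a contradiction. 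Now I argue coordinatewise. If $A_j$ is infinite with $\sum_{p\in A_j}c_p=\infty$, then $C^{(j)}$ still tends to $0$ and sums to $\infty$, so by hypothesis the coordinate-$j$ greedy expansion converges and $f_m^j\to 0$. In the remaining case ($A_j$ finite, or infinite with $\sum_{p\in A_j}c_p<\infty$) the partial sums of $\sum_{p\in A_j}c_p\varphi_p^{(j)}$ are Cauchy, so $f_m^j$ converges to some $w^j\in H_j$; evaluating $\sup_{\psi\in\D_j}\langle f_{m-1}^j,\psi\rangle\le s_m$ along the subsequence where $s_m\to 0$, and using that $v\mapsto\sup_{\psi\in\D_j}\langle v,\psi\rangle$ is continuous, gives $\sup_{\psi\in\D_j}\langle w^j,\psi\rangle=0$, whence $w^j=0$ by completeness and symmetry of $\D_j$. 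Either way $f_m^j\to 0$ for every $j$, so $\lVert f_m\rVert^2=\sum_{j=1}^N\lVert f_m^j\rVert^2\to 0$, which is the desired convergence.

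I expect the main obstacle to be precisely this coefficient-splitting issue: there is no control on how the divergence $\sum c_p=\infty$ is distributed among the $N$ coordinates, so a summand may receive only a convergent subseries and the per-summand hypothesis cannot be invoked for it. The idea that resolves this is that such a ``starved'' coordinate must stabilize to a limit which is invisible to the dictionary, in the sense that its supremum against $\D_j$ vanishes along the $\liminf$ subsequence provided by the global energy estimate; completeness and symmetry of $\D_j$ then force that limit to be $0$.
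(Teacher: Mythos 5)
Your proof is correct, and it genuinely departs from the paper's at the one crucial point: how to handle the ``starved'' coordinates, i.e.\ those receiving only finitely many coefficients or an absolutely convergent subseries. Both proofs share the same skeleton --- the dictionary verification, the observation that the $t=1$ selection rule in $H$ restricts to a genuine $t=1$ greedy expansion of $f^j$ in each $H_j$ with the corresponding subsequence of coefficients, and the invocation of the per-summand hypothesis for any coordinate whose coefficient subseries diverges. But for the starved coordinates the paper argues by contradiction through the selection dynamics: if such a coordinate's limit $g$ differed from $f^{l''}$, completeness of $\D_{l''}$ gives a fixed test element against which the $l''$-remainder keeps inner product at least some $\epsilon''>0$ for all time, whereas the remainder of a divergent coordinate $l'$ has dictionary-sup tending to $0$; the $t=1$ rule then forbids ever selecting coordinate $l'$ again (or, in the finite-index subcase, forces selecting $l''$ once more, after a further reduction to the $l''$ maximizing $|\langle f^{l''}-g,\tilde{\varphi}^{l''}\rangle|$), a contradiction with divergence (resp.\ finiteness). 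You replace all of this with a global energy estimate: telescoping $\lVert f_n\rVert^2$ under the $t=1$ rule and using $c_m\to 0$, $\sum_m c_m=\infty$ yields $\liminf_m\sup_{g\in\D}\langle f_{m-1},g\rangle=0$, and then the $1$-Lipschitz continuity of $v\mapsto\sup_{\psi\in\D_j}\langle v,\psi\rangle_{H_j}$, applied along that subsequence to the (norm-convergent) starved remainders, shows their limit is orthogonal to $\D_j$, hence zero by completeness and symmetry. Your route is more unified: it treats the finite and infinite-but-summable cases at once, needs no maximality trick or case split, and does not even need to single out a divergent witness coordinate $l'$, since global divergence enters only through the energy identity. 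The paper's route, in exchange, avoids the energy estimate entirely and stays purely combinatorial; but both arguments ultimately hinge on the same cross-coordinate comparison that the $t=1$ selection rule supplies.
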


\begin{proof}
	Let $f\in H$. Then $f=(f^1,\dots,f^N)$ with $f^j\in H_j$ for every $j\in\{1,\dots, N\}$. Since a generic element $\varphi\in\D$ is of the form
	\[
	\varphi=(0,\dots,0,\varphi^l,0,\dots,0)
	\] 
	with $l\in\{1,\dots,N\}$ and $\varphi^l\in\D_l$, we have
	\begin{equation}\label{sd4}
	\langle f, \varphi\rangle_H=\langle (f^1,\dots,f^N), (0,\dots,0,\varphi^l,0,\dots,0)\rangle_H=\langle f^l, \varphi^l\rangle_{H_l}\; .
	\end{equation} 
	The inner product in $H$ between $f$ and an element of $\D$ is then the inner product in $H_l$ of the unique non-zero component of the element of $\D$ with the corresponding component of $f$.
	
	We verify first that $\D$ is a symmetric dictionary. The fact that $\D$ is symmetric and that its elements have norm $1$ is straightforward. We then have to prove that $\D$ is complete; let $f\in H$ be such that
	\[
	\langle f,\varphi\rangle_H=0
	\]
	for every $\varphi\in\D$. By the observation above we then have
	\[
	\langle f^j, \varphi^j\rangle_{H_j}=0
	\]
	for every $j\in\{1,\dots,N\}$ and $\varphi^j\in\D_j$. Since each $\D_j$ is complete in $H_j$, we deduce that $f^j=0$ for every $j$, and so $f=0$. We then have that $\D$ is a dictionary.
	
	Observe now that at the $m$-th step of the greedy algorithm with prescribe coefficients $C$ applied to $f\in H$ we look for an element $\varphi_m\in\D$ such that
	\begin{equation*}
	\langle f_{m-1},\varphi_m\rangle_H=\sup_{\varphi\in\D}\langle f_{m-1},\varphi\rangle_H\; ,
	\end{equation*}
	where $f_{m-1}=(f_{m-1}^1,\dots,f_{m-1}^N)$ is the remainder of the greedy approximation at the $(m-1)$-th step. The element $\varphi_m$ selected by the algorithm is of the form
	\[
	\varphi_m=(0,\dots,0,\varphi_m^{j(m)},0,\dots,0)\; ,
	\] 
	with $j(m)\in\{1,\dots,N\}$ and $\varphi_m^{j(m)}\in\D_{j(m)}$. Then at the $m$-th step the remainder $f_{m-1}$ is modified only in its $j(m)$-th component. More precisely we obtain
	\[
	f_m=(f_m^1,\dots,f_m^N)=(f_{m-1}^1,\dots,f_{m-1}^{j(m)-1},f_{m-1}^{j(m)}-c_m\varphi_m^{j(m)},f_{m-1}^{j(m)+1},\dots,f_{m-1}^N)\; .
	\]
	Then, when we write the greedy expansion $\sum_{m=1}^\infty c_m\varphi_m$ of $f$, the coefficients in $C$ are distributed in the various components of the direct sum; so there exist $N$ subsequences $\{c_{n_p(l)}\}_p$ of $C$, for $l\in\{1,\dots,N\}$, such that the greedy expansion of $f$ is of the form
	\begin{equation}\label{add-vectorexp}
	\left(\sum_{p}c_{n_p(1)}\varphi_{p}^1,\dots,\sum_{p}c_{n_p(N)}\varphi_{p}^N\right)\; ,
	\end{equation}
	with $\varphi_p^l\in\D_l$ for every $l=1,\dots,N$.
	We observe that some of the sequences $\{c_{n_p(l)}\}_{p}$ may be finite (and then the corresponding component in \eqref{add-vectorexp} is a finite sum), or may even be the empty set (in this case we mean that the corresponding component in \eqref{add-vectorexp} is $0$). Moreover the following properties are satisfied:
	\[
	\bigcup_{l=1}^N\{n_p(l)\}_{p}\subseteq \N,
	\]
	\[ \forall l\neq k,\;\{n_p(l)\}_{p}\cap\{n_p(k)\}_{p}=\emptyset\; .
	\]
	
	Our aim is to prove that
	\[f=\sum_{m} c_m\varphi_m\; ,\]
	so we have to prove that for every $l\in\{1,\dots,N\}$ the series (or finite sum) $\sum_{p}c_{n_p(l)}\varphi_{p}^l$ converges to $f^l$. Observe that for every $l$ the expression $\sum_pc_{n_p(l)}\varphi_p^l$ is a greedy expansion of $f^l$ with respect to the dictionary $\D_l$ with prescribed coefficients $\{c_{n_p(l)}\}_p$. We have two different cases.
	
	Suppose first that
	\[
	\bigcup_{l=1}^N\{n_p(l)\}_{p}\subsetneq \N\; .
	\]
	Then for every $l\in\{1,\dots,N\}$ the sequence $\{c_{n_p(l)}\}_{p}$ is finite, and this means that at a certain step $m$ the greedy algorithm stops, and this happens only if $f_{m-1}=0$. In this case the greedy expansion is a finite sum that equals $f$, and so the proof is complete.
	
	We have to analyze the case
	\[
	\bigcup_{l=1}^N\{n_p(l)\}_{p}= \N\; .
	\]
	Since by hypothesis $\sum_{m=1}^\infty c_m=\infty$ and the coefficients in $C$ are positive, there exists at least an index $l'$ such that $\sum_{p}c_{n_p(l')}=\infty$. Moreover, since $\lim_{m\to\infty}c_m=0$, we have that $\{c_{n_p(l')}\}_{p}$ satisfies $\lim_{p\to\infty}c_{n_p(l')}=0$. Then by hypothesis the greedy expansion \[\sum_{p}c_{n_p(l')}\varphi_p^{l'}\] converges to $f^{l'}$. Now let us suppose that the greedy expansion of $f$ does not converge to $f$, i.e., there exists $l''$ such that
	\begin{equation}\label{sd2}
	\sum_{p}c_{n_p(l'')}\varphi_p^{l''} 
	\end{equation}
	does not converge to $f^{l''}$. We have to distinguish two cases, since $\{n_p(l'')\}_p$ can be an (infinite) sequence or a finite set.
	
	Suppose first that $\{n_p(l'')\}_p$ is an (infinite) sequence. We have already observed that $\{c_{n_p(l'')}\}_{p}$ satisfies $\lim_{p\to\infty}c_{n_p(l'')}=0$. Then we must have
	\begin{equation}\label{sd3}
	\sum_{p}c_{n_p(l'')}<\infty\; ,
	\end{equation}
	otherwise the corresponding greedy expansion would converge to $f^{l''}$ by hypothesis. Moreover we observe that, since all the elements in the dictionary $\D$ have unitary norm, \eqref{sd3} implies
	\[
	\sum_p\Vert c_{n_p(l'')}\varphi_p^{l''}\Vert=\sum_p\vert c_{n_p(l'')}\vert=\sum_p c_{n_p(l'')}<\infty\; .
	\]
	The series \eqref{sd2} is then absolutely convergent, and so there exists $g\in H_{l''}$ with $g\neq f^{l''}$, such that the greedy expansion $\sum_{p}c_{n_p(l'')}\varphi_p^{l''}$ converges to $g$. Then
	\[
	\lim_{m\to\infty} \left(f^{l''}-\sum_{p=1}^m c_{n_p(l'')}\varphi_p^{l''}\right)=f^{l''}-g\neq 0\; .
	\]
	Since $\D_{l''}$ is complete in $H_{l''}$, there exists $\tilde{\varphi}^{l''}\in\D_{l''}$ such that
	\[
	\langle f^{l''}-g,\tilde{\varphi}^{l''}\rangle_{H^{l''}}\neq 0\; .
	\]
	By the continuity of the inner product we have
	\[
	\lim_{m\to\infty}\langle f^{l''}-\sum_{p=1}^mc_{n_p(l'')}\varphi_p^{l''},\tilde{\varphi}^{l''}\rangle_{H^{l''}} =\langle f^{l''}-g,\tilde{\varphi}^{l''}\rangle_{H^{l''}}\neq 0\; ,
	\]
	so there exists $\epsilon''>0$ and an integer $m''>0$ such that for every $m>m''$
	\begin{equation}\label{sd5}
	\vert\langle f^{l''}-\sum_{p=1}^m c_{n_p(l'')}\varphi_p^{l''},\tilde{\varphi}^{l''}\rangle_{H^{l''}}\vert\geqslant\epsilon''\; .
	\end{equation}
	On the other hand, since the greedy expansion $\sum_{p}c_{n_p(l')}\varphi_p^{l'}$ converges to $f^{l'}$, we have
	\[
	\lim_{m\to\infty} \left(f^{l'}-\sum_{p=1}^m c_{n_p(l')}\varphi_p^{l'}\right)=0\; .
	\]
	Then for every $\varphi^{l'}\in\D_{l'}$ we obtain
	\[
	\left\vert\langle f^{l'}-\sum_{p=1}^mc_{n_p(l')}\varphi_p^{l'},\varphi^{l'}\rangle_{H^{l'}}\right\vert \leqslant \left\Vert f^{l'}-\sum_{p=1}^mc_{n_p(l')}\varphi_p^{l'}\right\Vert \to 0
	\]
	for $m\to\infty$, and so for every $\epsilon>0$ there exists an integer $m'$ such that for every $m>m'$ and for every $\varphi^{l'}\in\D_{l'}$
	\begin{equation}\label{sd6}
	\vert\langle f^{l'}-\sum_{p=1}^m c_{n_p(l')}\varphi_p^{l'},\varphi^{l'}\rangle_{H_{l'}}\vert < \epsilon\; .
	\end{equation}
	Fix now $\epsilon<\epsilon''$. From \eqref{sd5} and \eqref{sd6} we deduce that for every $\varphi^{l'}\in\D_{l'}$ and for every $m>\max\{m',m''\}$ we have
	\begin{equation*}
	\vert\langle f^{l'}-\sum_{p=1}^m c_{n_p(l')}\varphi_p^{l'},\varphi^{l'}\rangle_{H_{l'}}\vert < \epsilon<\epsilon''\leqslant\vert\langle f^{l''}-\sum_{p=1}^m c_{n_p(l'')}\varphi_p^{l''},\tilde{\varphi}^{l''}\rangle_{H_{l''}}\vert\; .
	\end{equation*}
	Since the inner product between $f_{m-1}$ and an element of $\D$ is given by $\eqref{sd4}$, we have that starting from the step $\max\{m',m''\}$, the greedy algorithm does not select any more an element $\varphi_m$ of the dictionary $\D$ having as only non vanishing component the $l'$-th one. Then the series
	\[
	\sum_p c_{n_p(l')}\varphi_p^{l'}
	\]
	is a finite sum, but this contradicts the fact that $\sum_p c_{n_p(l')}=\infty$. The contradiction comes from the fact that we have assumed the existence of $l''$ such that the series $\sum_p c_{n_p(l'')}\varphi_p^{l''}$ does not converge to $f^{l''}$; then we have completed the proof, at least in the case when $\{n_p(l'')\}_p$ is an (infinite) sequence.
	
	We still have to consider the case when $\{n_p(l'')\}_p$ is a finite set; suppose that such a set contains $q$ elements. Then
	\begin{equation}\label{finite-l-second}
	\sum_{p=1}^q c_{n_p(l'')}\varphi_p^{l''}=g\in H_{l''}\; ,
	\end{equation}
	and we are assuming that $g\neq f^{l''}$.
	As above there exists $\tilde{\varphi}^{l''}$ and $\epsilon''>0$ such that
	\begin{equation*}
	\vert\langle f^{l''}-g,\tilde{\varphi}^{l''}\rangle_{H^{l''}}\vert\geqslant\epsilon''\; .
	\end{equation*}
	We can suppose without loss of generality that $l''$ is such that $|\langle f^{l''}-g,\tilde{\varphi}^{l''}\rangle|$ is maximum, among all the indexes $l''$ such that $\{n_p(l'')\}$ is finite.
	By \eqref{sd6}, choosing $\epsilon<\epsilon''$, we conclude that for every $m>m'$ and for every $\varphi^{l'}\in\D_{l'}$
	\begin{equation}\label{lastadd}
	\vert\langle f^{l'}-\sum_{p=1}^m c_{n_p(l')}\varphi_p^{l'},\varphi^{l'}\rangle_{H_{l'}}\vert <\vert\langle f^{l''}-g,\tilde{\varphi}^{l''}\rangle_{H_{l''}}\vert\, .
	\end{equation}
	Now, $\{n_p(l')\}_p$ is an infinite sequence, the right-hand side of \eqref{lastadd} is maximum among all $l''$ such that $\{ n_p(l'')\}_p$ is finite and we have already proved that if $\{ n_p(l'')\}_p$ is infinite we have convergence; then at some step the greedy algorithm will select an element of the dictionary $\D$ whose non vanishing component is the $l''$-th one. This would add a new term in the sum \eqref{finite-l-second}, and so $\{n_p(l'')\}_p$ would contain $q+1$ elements. Also in this case we have a contradiction, due to the assumption that there exists $l''$ such that $\sum_p c_{n_p(l'')}\varphi_p^{l''}$ is different from $f^{l''}$. Then the greedy expansion of $f$ converges to $f$ and the proof is complete.
\end{proof}
\begin{cor}\label{new3}
	Let $H$ be a Hilbert space and $\mathcal{E}$ an orthonormal basis of $H$. Consider a finite set $\mathcal{E}'\subset\mathcal{E}$, and let $Y=\{y_j\}_{j}$ be a set of (unitary norm) elements belonging to $\lc\{\mathcal{E}'\}$. Consider the dictionary
	\[
	\D=\mathcal{E}^{\pm}\cup Y^{\pm}
	\]
	where $\mathcal{E}^{\pm}$ and $ Y^{\pm}$ are the symmetrized of $\mathcal{E}$ and $Y$, respectively. Let moreover $C$ be a sequence of prescribed coefficients such that
	\[
	\lim_{p\to\infty}c_p=0\; ,\quad \sum_{p=1}^\infty c_p=\infty\; .
	\]
	Then for every $f\in H$ all the possible greedy expansions of $f$ with respect to the dictionary $\D$ with prescribed coefficients $C$ and weakening parameter $t=1$ converge to $f$.
\end{cor}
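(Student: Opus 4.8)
The plan is to realize $\D$ as the image, under an isometric isomorphism, of a direct-sum dictionary of the type treated in Theorem \ref{sd8}, split into two blocks: a finite-dimensional block governed by Theorem \ref{fin1}, and a symmetrized-orthonormal-basis block governed by Theorem \ref{val2}.

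First I would set $M:=\lc\{\mathcal{E}'\}$. Since $\mathcal{E}'$ is finite, $M$ is finite-dimensional, and because $\mathcal{E}$ is an orthonormal basis of $H$ we have $H=M\oplus M^\perp$ with $M^\perp=\overline{\lc\{\mathcal{E}\setminus\mathcal{E}'\}}$. The decisive structural observation is that the hypothesis $Y\subset\lc\{\mathcal{E}'\}=M$ forces every element of $Y^\pm$ to lie in $M$, so the whole dictionary splits cleanly along $M\oplus M^\perp$. Concretely I would put
\[
\D_1:=(\mathcal{E}')^\pm\cup Y^\pm\subset M,\qquad \D_2:=(\mathcal{E}\setminus\mathcal{E}')^\pm\subset M^\perp.
\]

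Next I would verify the hypotheses of Theorem \ref{sd8} for these two blocks. That $\D_1$ and $\D_2$ are symmetric dictionaries (unit norm, complete) is immediate: $(\mathcal{E}')^\pm$ already spans $M$, and $(\mathcal{E}\setminus\mathcal{E}')^\pm$ is a symmetrized orthonormal basis of $M^\perp$. For $\D_1$, since $M$ is finite-dimensional and $t=1$, Theorem \ref{fin1} applies with $t_n\equiv 1$, in which case its two conditions reduce exactly to $\sum_p c_p=\infty$ and $\lim_p c_p=0$, yielding convergence for every coefficient sequence of the prescribed type. For $\D_2$, Theorem \ref{val2} gives convergence under exactly these same conditions. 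Hence both blocks satisfy the hypotheses of Theorem \ref{sd8}, and that theorem applies with $N=2$ to the direct-sum dictionary
\[
\tilde{\D}=\{(\varphi^1,0):\varphi^1\in\D_1\}\cup\{(0,\varphi^2):\varphi^2\in\D_2\}
\]
on $M\oplus M^\perp$: every greedy expansion there converges.

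Finally I would transport this conclusion back to $H$ via the isometric isomorphism $\alpha$ of \eqref{isom-alpha}. Since $\mathcal{E}'\subset M$ and $Y\subset M$, the projection decomposition gives $\alpha(\varphi)=(\varphi,0)$ for $\varphi\in(\mathcal{E}')^\pm\cup Y^\pm$, while $\alpha(\varphi)=(0,\varphi)$ for $\varphi\in(\mathcal{E}\setminus\mathcal{E}')^\pm$; thus $\alpha(\D)=\tilde{\D}$ exactly. Applying Lemma \ref{sd9} to the isometric isomorphism $\alpha^{-1}\colon M\oplus M^\perp\to H$ (so that the image of $\tilde{\D}$ is precisely $\D$) then yields convergence of all greedy expansions of every $f\in H$ with respect to $\D$, which is the claim. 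I do not expect a serious obstacle here; the only point requiring care is the verification that the dictionary decomposes along the two summands, and this is exactly what the finiteness of $\mathcal{E}'$ together with $Y\subset\lc\{\mathcal{E}'\}$ guarantees.
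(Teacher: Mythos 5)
Your proposal is correct and takes essentially the same route as the paper's own proof: the same splitting $H\cong M\oplus M^\perp$ with $M=\lc\{\mathcal{E}'\}$, the same two dictionaries $(\mathcal{E}')^\pm\cup Y^\pm$ and $(\mathcal{E}\setminus\mathcal{E}')^\pm$ handled respectively by Theorem \ref{fin1} and Theorem \ref{val2}, combined via Theorem \ref{sd8} and transported back to $H$ through Lemma \ref{sd9}. Your remark that Lemma \ref{sd9} should be applied to $\alpha^{-1}\colon M\oplus M^\perp\to H$ (so that the known convergence lives on the domain side) is in fact a slightly more precise reading of that step than the paper's own phrasing.
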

\begin{proof}
	Let $M\subset H$ be the finite dimensional subspace generated by $\mathcal{E}'$, and $M^\perp\subset H$ the orthogonal of $M$. By \eqref{isom-projection} we know that $H\cong M\oplus M^\perp$, with corresponding isomorphism given by \eqref{isom-alpha}. Consider the symmetrized $(\mathcal{E}')^{\pm}$ of $\mathcal{E}'$, and define
	\[
	\D_M=(\mathcal{E}')^{\pm}\cup Y^{\pm}\; ,
	\]
	\[
	\D_{M^\perp}=\mathcal{E}^{\pm}\backslash(\mathcal{E}')^\pm\; ;
	\]
	$\D_M$ is a symmetric dictionary for $M$ and $\D_{M^\perp}$ is a symmetric dictionary for $M^\perp$. By Theorem \ref{fin1} (applied to $M$) and Theorem \ref{val2} (applied to $M^\perp$) we have that the hypotheses of Theorem \ref{sd8} are satisfied, so for every $f\in M\oplus M^\perp$, all the possible greedy expansions of $f$ with respect to the dictionary
	\begin{equation}\label{sd10}
	\{(\varphi^M,0)\}_{\varphi^M\in\D_M}\cup\{(0,\varphi^{M^\perp})\}_{\varphi^{M^\perp}\in\D_{M^\perp}}
	\end{equation}
	with prescribed coefficients $C$ and weakening parameter $t=1$ converge. We then conclude from Lemma \ref{sd9}, since the isomorphism $\alpha: H\rightarrow M\oplus M^\perp$ given by \eqref{isom-alpha} transforms $\D$ in the dictionary \eqref{sd10}.
\end{proof} 

\section{Conclusions}
In this paper we have extended the convergence result of the greedy algorithm with prescribed coefficients known for finite dimensional Hilbert spaces, and we have proved that such a result, in the case of a weakening parameter $t=1$, holds also in the case of infinite dimensional spaces when the dictionary is the symmetrized of an orthonormal basis, to which linear combinations of elements from a finite subset of the basis itself can eventually be joined. These are sufficient conditions on the dictionary to have convergence of the algorithm; the question to characterize the dictionaries for which the greedy algorithm converges under the considered hypotheses on the coefficients remains open.

\section*{Acknowledgements}

The work of Al.R. Valiullin on Section 3 was supported by the Russian Science Foundation (project no. 21-11-00131) at Lomonosov Moscow State University. A. Oliaro was partially supported by the INdAM--GNAMPA project CUP\_E53C22001930001.


\begin{thebibliography}{AAA}
	
	\bibitem{articolo1}
	V. N. ~Temlyakov, {\em Weak greedy algorithms}, Adv. Comput. Math. {\bf 12}, 2-3 (2000), 213-227.

	\bibitem{articolo2}
	V. N. ~Temlyakov, {\em Greedy expansions in Banach spaces}, Adv. Comput. Math. {\bf 26}, 4 (2007), 431-449.

	\bibitem{articolo3}
	V. N. ~Temlyakov, {\em Greedy Algorithms with Prescribed Coefficients},  J. Fourier Anal. Appl. {\bf 13}, 1 (2007), 71-86.
	
	\bibitem{libroprincipale}
	V. N. ~Temlyakov, {\em Greedy approximation}, Acta Numer. {\bf 17} (2008), 235-409.
		
	\bibitem{articolo5}
	Arthur R. ~Valiullin, Albert R. ~Valiullin, Vladimir V. ~Galatenko, {\em Greedy Expansions with Prescribed Coefficients in Hilbert Spaces}, Int. J. Math. Math. Sci. (2018), art. ID 4867091.

	\bibitem{articolo4}
	Artur R. ~Valiullin, Albert R. ~Valiullin, {\em Sharp conditions for the convergence of greedy expansions with prescribed coefficients}, Open Math {\bf 19}, 1 (2021), 1-10.

\end{thebibliography}
\end{document}